\newtheorem{theorem}{Theorem}[section]
\newtheorem{corollary}[theorem]{Corollary}
\newtheorem{example}[theorem]{Example}
\newtheorem{lemma}[theorem]{Lemma}
\newtheorem{proposition}[theorem]{Proposition}
\newtheorem{remark}[theorem]{Remark}
\numberwithin{equation}{section}
\def\tcalta{\mathcal{T}(\theta, \alpha)}
\def\ata{A^{\theta,\alpha}}
\def\aat{A^{\alpha,\theta}}
\def\nk0t{\|\tilde k_0^\theta\|^{-2}}
\def\kda{K_\alpha}
\def\kdt{K_\theta}
\def\kdta{K_{\frac{\theta}{\alpha}}}
\def\b1{\mathcal{B}_1(\kdt,\kda)}
\def\kdb{K_\beta}
\begin{document}

\title{Asymmetric truncated Toeplitz operators and conjugations}

\author[M. C. C\^amara]{M. Cristina C\^amara}
\address{M. Cristina C\^amara, Center for Mathematical Analysis, Geometry and Dynamical Systems, Mathematics Department, Instituto Superior T\'ecnico, Universidade de Lisboa, Av. Rovisco Pais, 1049--001 Lisboa, Portugal}
\email{cristina.camara@tecnico.ulisboa.pt}

\author[K. Kli\'s--Garlicka]{Kamila Kli\'s--Garlicka}
\address{Kamila Kli\'s-Garlicka, Department of Applied Mathematics, University of Agriculture, ul. Balicka 253c, 30-198 Krak\'ow, Poland}
\email{rmklis@cyfronet.pl}

\author[M. Ptak]{Marek Ptak}
\address{Marek Ptak, Department of Applied Mathematics, University of Agriculture, ul. Balicka 253c, 30-198 Krak\'ow, Poland}
\email{rmptak@cyf-kr.edu.pl}

\subjclass[2010]{Primary 47B35; Secondary 30H10, 47A15}
\keywords{asymmetric truncated Toeplitz operator, conjugation, C--symmetry}
\thanks{Research of the first author was partially supported by Funda\c{c}\~{a}o para a Ci\^{e}ncia e a Tecnologia (FCT/Portugal), through Project UID/MAT/04459/2013. Research of the second and the third  authors was supported by the Ministry of Science and Higher Education of the Republic of Poland.}

\begin{abstract}
Truncated Toeplitz operators in a model space are C--symmetric with respect
to a natural conjugation in that space. We show that this and another
conjugation associated to an orthogonal decomposition possess unique
properties and we study their relations with asymmetric truncated Toeplitz
operators in terms of C--symmetry. New connections with Hankel operators are
established through this approach.
\end{abstract}
\maketitle
\makeatletter
\renewcommand\@makefnmark%
{\mbox{\textsuperscript{\normalfont\@thefnmark)}}}
\makeatother


\section{Introduction}
Let $\mathcal{H}$ be a complex Hilbert space, and denote by $L(\mathcal{H})$ the algebra of all bounded linear operators on $\mathcal{H}$. A {\it conjugation} on $\mathcal{H}$ is an antilinear involution $C\colon \mathcal{H}\to\mathcal{H}$ such that $\langle Cf,Cg\rangle=\langle g,f\rangle$ for all $f, g\in \mathcal{H}$. Conjugations and their relations with various classes of operators have been studied in Hilbert spaces for many years. A new motivation to study them came from  \cite{GP}, and many interesting results  have recently appeared on this topic \cite{Bess, GP2, LZ, KKBLMP, KKMP, StSt}. In particular, the study of {\it C--symmetric} operators, i.e., operators $A\in L(\mathcal{H})$ such that  $CAC=A^*$, has attracted much attention, with particular emphasis on the case where the underlying Hilbert spaces are model spaces, defined as follows.

Let us denote by $L^2$  the space $L^2(\mathbb{T},m)$, where $\mathbb{T}$ is the unit circle and $m$ is the normalized Lebesgue measure on $\mathbb{T}$, and let $H^2=H^2(\mathbb{D})$ be the Hardy space on the unit disc, identified as usual with a subspace of $L^2$. If $\theta$ is an inner function, i.e., $\theta\in H^\infty$ ($H^\infty=H^\infty(\mathbb{D})$ denotes the space of all bounded analytic functions in $\mathbb{D}$), $|\theta(t)|=1$ a.e. on $\mathbb{T}$, the model space $\kdt$ is defined by $\kdt=H^2\ominus\theta H^2$. It follows from Beurling's theorem that these are the invariant subspaces for the classical backward shift $S^*$. We denote by $P_\theta$ the orthogonal projection  from $L^2$ onto $\kdt$, and by $K^\infty_\theta$ the dense subset of $\kdt$ defined by $\kdt^\infty=\kdt\cap H^\infty$ (\cite{Sarason}).

One of the most important classes of operators on model spaces is that of truncated Toeplitz operators (\cite{Sarason}), which have been widely studied recently (see for example \cite{ BCT, GMR, Sarason}).
For $\varphi\in L^2$, a {\it truncated Toeplitz operator} $A_\varphi^{\theta}$ is defined, for all $f\in\kdt$ such that $\varphi f\in L^2$ (and, in particular, for all $f\in\kdt^\infty$), by $$A_\varphi^\theta f=P_\theta(\varphi f).$$
If this operator is bounded, then it can be uniquely extended to a bounded operator on $\kdt$; in that case we say that $A_\varphi^\theta\in \mathcal{T}(\theta)$.

One can define a conjugation $C_\theta$ in $L^2$,
$C_\theta(f)=\theta \bar z \bar f$ for $f\in L^2$, which  preserves
 the model space $\kdt$ (i.e., $C_\theta P_\theta=P_\theta C_\theta$), and therefore induces a conjugation in $\kdt$, also denoted by $C_\theta$. This conjugation plays an important role in the study of truncated Toeplitz operators. In fact,  the latter are $C_\theta$--symmetric \cite{GP}, i.e., $C_\theta AC_\theta=A^*$ for $A\in\mathcal{T}(\theta)$ or, equivalently, $AC_\theta-C_\theta A^*=0$.

More generally, one can consider {\it asymmetric truncated Toeplitz operators} between two (eventually) different model spaces $\kdt$ and $\kda$, where  $\alpha$ and $\theta$ are nonconstant inner functions. For $\varphi\in L^2$,  we define  \begin{equation}A_\varphi^{\theta,\alpha}\colon \mathcal{D}\subset \kdt\to \kda,\quad\quad
   A_\varphi^{\theta,\alpha} f=P_{\alpha}(\varphi f)\end{equation} with domain %
   $\mathcal{D}=\mathcal{D}( A_\varphi^{\theta,\alpha})=\{ f\in \kdt\colon \varphi f\in L^2\}\supset\kdt^\infty$.
   Again, if this operator is bounded, it has a unique bounded extension to $\kdt$,  $A_{\varphi}^{\theta,\alpha}\colon \kdt\to \kda$, and  the class of all such operators is denoted by $\tcalta$.  Recall after \cite{BCKP} that if $\ata_\varphi\in \mathcal{T}(\theta,\alpha)$, then $(\ata_\varphi)^*=\aat_{\bar\varphi}\in\mathcal{T}(\alpha,\theta)$. Asymmetric truncated Toeplitz operators were studied in \cite{BCKP} in the context of $H^2(\mathbb{D})$, and in \cite{CCJP} in the context of the Hardy space on the upper half-plane  $H^p(\mathbb{C}^+)$ ($1<p<\infty$).


When $\alpha$ divides $\theta$ ($\alpha\leqslant \theta$), i.e., $\frac{\theta}{\alpha}$ is an inner function, then $\kda\subset\kdt$ and we have the orthogonal decomposition $\kdt=\kda\oplus\alpha K_{\frac{\theta}{\alpha}}$. This suggests to define another conjugation in $\kdt$, besides $C_\theta$, denoted by $C_{\alpha,\frac{\theta}{\alpha}}$ and defined by \eqref{cat}. It turns out that these conjugations are unique in the sense that they coincide, on both $\kda$ and $\alpha K_{\frac{\theta}{\alpha}}$, with conjugations  on $L^2$ for which the operator of multiplication by the independent variable, $M_z$, is C--symmetric (Theorem \ref{cta}).

In this paper we investigate the relations of asymmetric truncated Toeplitz operators with these two conjugations and we show that certain identities  of C--symmetric type still hold for these operators when the conjugation  C is one of the above mentioned ones, $C_\theta$ or $C_{\alpha,\frac{\theta}{\alpha}}$ (Theorem \ref{csymmetry}). Moreover, since we no longer have the equality $\ata_\varphi C-C(\ata_\varphi)^*=0$ in general, we study various differences of that type and we show that they can be expressed in terms of Hankel operators.

\section{The actions $\diamond$ and $\boxplus$}
In the following section the letters $\mathcal{H}$, $\mathcal{K}$, with or without indexes, denote
 complex Hilbert spaces. Let $L(\mathcal{H}, \mathcal{K})$ (respectively, $LA(\mathcal{H}, \mathcal{K})$) denote the space of all bounded linear (respectively, antilinear) operators from $\mathcal{H}$ to $\mathcal{K}$. Recall that for $X\in LA(\mathcal{H}, \mathcal{K})$ there is a unique antilinear operator $X^\sharp$, called the {\it antilinear adjoint} of $X$, satisfying the equality
\begin{equation}\label{as}\langle Xf,g\rangle=\overline{\langle f,X^\sharp g\rangle},\end{equation}
for all $f\in \mathcal{H}$, $g\in \mathcal{K}$. It is easy to see that the antilinear adjoint has the following properties:

\begin{proposition}\label{adjoint}
\begin{enumerate}
\item If $X\in LA(\mathcal{H},\mathcal{K})$, then $(X^\sharp)^\sharp=X$.
\item If $X_1\in LA(\mathcal{H}_1,\mathcal{H}_2)$ and  $X_2\in LA(\mathcal{H}_2,\mathcal{H}_3)$,  then $X_2 X_1\in L(\mathcal{H}_1,\mathcal{H}_3)$ and $(X_2 X_1)^*= X_1^\sharp X_2^\sharp$.
\item If  $A\in L(\mathcal{H}_1, \mathcal{H})$ and $X\in LA(\mathcal{H},\mathcal{K})$, then $(XA)^\sharp=A^*X^\sharp$.
\item If  $B\in L(\mathcal{K},\mathcal{K}_1)$ and $X\in LA(\mathcal{H},\mathcal{K})$, then $(BX)^\sharp=X^\sharp B^*$.
\end{enumerate}
\end{proposition}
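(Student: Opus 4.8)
The plan is to prove the four properties of the antilinear adjoint directly from the defining relation \eqref{as}, in each case verifying that a candidate antilinear operator satisfies the characterizing identity and then invoking the uniqueness of the antilinear adjoint that is implicit in the definition. The central tool throughout is the conjugate-symmetry of the inner product together with the fact that conjugation distributes over complex conjugation, i.e.\ $\overline{\overline{w}}=w$ and $\overline{\langle f,g\rangle}=\langle g,f\rangle$. Before touching the individual items I would record the uniqueness principle once and for all: if $Y,Y'\in LA(\mathcal{K},\mathcal{H})$ both satisfy $\langle Xf,g\rangle=\overline{\langle f,Yg\rangle}=\overline{\langle f,Y'g\rangle}$ for all $f,g$, then $\langle f,Yg\rangle=\langle f,Y'g\rangle$ for all $f$, whence $Yg=Y'g$ for all $g$. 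This reduces each part to exhibiting one operator that works.

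For part (1), I would start from $\langle Xf,g\rangle=\overline{\langle f,X^\sharp g\rangle}$ and conjugate both sides to get $\langle X^\sharp g,f\rangle=\overline{\langle g,Xf\rangle}$, which is exactly the defining relation for $X^\sharp$ with the roles of the two spaces swapped, identifying $X$ as the antilinear adjoint of $X^\sharp$; by uniqueness $(X^\sharp)^\sharp=X$. For part (2), the key preliminary observation is that the composite of two antilinear maps is linear, so $X_2X_1\in L(\mathcal{H}_1,\mathcal{H}_3)$ and its adjoint is an ordinary (linear) adjoint, not a sharp. I would compute $\langle X_2X_1 f,h\rangle$ for $f\in\mathcal{H}_1$, $h\in\mathcal{H}_3$ by applying \eqref{as} for $X_2$ first and then for $X_1$, taking care that each application introduces a complex conjugation, so that the two conjugations compose to the identity and a genuine linear-adjoint identity $\langle X_2X_1 f,h\rangle=\langle f,X_1^\sharp X_2^\sharp h\rangle$ emerges. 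Parts (3) and (4) are the ``mixed'' cases where one factor is linear and one antilinear, so the composite is again antilinear and its adjoint is a sharp; here I would chase the defining relation keeping scrupulous track of where conjugation lands, using $\langle Af,g\rangle=\langle f,A^*g\rangle$ for the linear factor and \eqref{as} for the antilinear factor.

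The main obstacle, such as it is, lies entirely in bookkeeping the complex conjugations and distinguishing when the resulting adjoint should be a linear $*$ versus an antilinear $\sharp$. The danger is that an even number of antilinear factors yields a linear map whose adjoint obeys the ordinary unconjugated pairing, while an odd number yields an antilinear map whose adjoint obeys the conjugated pairing \eqref{as}; misreading the parity would produce a spurious conjugation in the final identity. I would therefore organize each verification so that the parity of antilinear maps applied is explicit at every step. Once the conjugations are tracked correctly the identities fall out mechanically, and each application of the uniqueness principle closes the argument; this is why the proposition is flagged as easy to see.
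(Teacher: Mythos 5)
Your proposal is correct: the paper gives no proof at all for this proposition (it is introduced with ``It is easy to see that\dots''), and your direct verification from the defining relation \eqref{as}, together with the uniqueness of the antilinear adjoint, is exactly the routine argument the authors leave to the reader. The conjugation bookkeeping in each of the four parts checks out, including the parity point that $X_2X_1$ is linear and therefore gets a $*$ rather than a $\sharp$.
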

Let $X_1\colon \mathcal{H}\to \mathcal{K}_1$, $X_2\colon\mathcal{H}\to \mathcal{K}_2$, $Y_1\colon \mathcal{K}_1\to\mathcal{H}$, $Y_2\colon\mathcal{K}_2\to\mathcal{H}$ be (linear or antilinear) operators.
Define the following actions:
$$X_{1}\diamond X_2 \colon \mathcal{H}\to \mathcal{K}_1\oplus \mathcal{K}_2, \, (X_1\diamond X_2)f=X_1f\oplus X_2 f $$
and
$$ Y_1\boxplus Y_2 \colon \mathcal{K}_1\oplus \mathcal{K}_2\to \mathcal{H}, \, (Y_1\boxplus Y_2)(f\oplus g)=Y_1f + Y_2 g.$$
\begin{proposition}\label{prop2} Let $X_1,X_2, Y_1, Y_2$ be antilinear operators, then:
\begin{enumerate}
\item $(X_{1}\diamond X_2)^\sharp=X_1^\sharp\boxplus X_2^\sharp$;
\item $(X_{1}\boxplus  X_2)^\sharp=X_1^\sharp\diamond X_2^\sharp$;
\item if $A\in L(\mathcal{K}_1\oplus \mathcal{K}_2,\mathcal{K})$, then
$
(A(X_{1}\diamond X_2))^\sharp=(X_1^\sharp\boxplus X_2^\sharp)A^*;
$
\item if $B\in L(\mathcal{H}_1, \mathcal{K}_1\oplus \mathcal{K}_2)$, then $
((Y_1\boxplus Y_2)B)^\sharp=B^*(Y_{1}^\sharp\diamond Y_2^\sharp).
$
\end{enumerate}
\end{proposition}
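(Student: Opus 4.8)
The plan is to derive everything from part~(1), which I would establish directly from the defining relation~\eqref{as} of the antilinear adjoint, and then bootstrap the remaining three items using the involutivity of $\sharp$ together with the composition rules of Proposition~\ref{adjoint}.

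For part~(1), I would fix $f\in\mathcal{H}$ and $g_1\oplus g_2\in\mathcal{K}_1\oplus\mathcal{K}_2$ and expand the pairing on the direct sum coordinatewise:
\begin{equation*}
\langle (X_1\diamond X_2)f,\, g_1\oplus g_2\rangle=\langle X_1 f, g_1\rangle+\langle X_2 f, g_2\rangle.
\end{equation*}
Applying~\eqref{as} to each summand turns the right-hand side into $\overline{\langle f, X_1^\sharp g_1\rangle}+\overline{\langle f, X_2^\sharp g_2\rangle}$; since conjugation is additive and $\boxplus$ is defined precisely so that $(X_1^\sharp\boxplus X_2^\sharp)(g_1\oplus g_2)=X_1^\sharp g_1+X_2^\sharp g_2$, this equals $\overline{\langle f,\,(X_1^\sharp\boxplus X_2^\sharp)(g_1\oplus g_2)\rangle}$. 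Comparing with~\eqref{as} and invoking uniqueness of the antilinear adjoint yields $(X_1\diamond X_2)^\sharp=X_1^\sharp\boxplus X_2^\sharp$.

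Part~(2) I would not prove by a fresh computation but deduce from~(1): applying~(1) to the antilinear operators $X_1^\sharp, X_2^\sharp$ (which have the correct source and targets for $\diamond$) gives $(X_1^\sharp\diamond X_2^\sharp)^\sharp=(X_1^\sharp)^\sharp\boxplus (X_2^\sharp)^\sharp=X_1\boxplus X_2$ by Proposition~\ref{adjoint}(1); taking $\sharp$ of both sides and using involutivity once more returns $X_1^\sharp\diamond X_2^\sharp=(X_1\boxplus X_2)^\sharp$, which is~(2). For parts~(3) and~(4) I would simply insert a linear factor and appeal to the mixed composition rules. In~(3), $A(X_1\diamond X_2)$ is a linear operator composed on the left with an antilinear one, so Proposition~\ref{adjoint}(4) gives $(A(X_1\diamond X_2))^\sharp=(X_1\diamond X_2)^\sharp A^*$, and substituting~(1) produces $(X_1^\sharp\boxplus X_2^\sharp)A^*$. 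In~(4), $(Y_1\boxplus Y_2)B$ is an antilinear operator precomposed with a linear one, so Proposition~\ref{adjoint}(3) gives $((Y_1\boxplus Y_2)B)^\sharp=B^*(Y_1\boxplus Y_2)^\sharp$, and substituting~(2) produces $B^*(Y_1^\sharp\diamond Y_2^\sharp)$.

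I expect no serious obstacle here; the only point demanding care is bookkeeping. One must track the conjugation bars correctly---antilinearity means each pairing is conjugated exactly once, and it is the additivity of conjugation that lets the two summands recombine---and one must verify at each step that the source and target spaces of $\diamond$, $\boxplus$, and the linear factors $A$, $B$ genuinely match, so that every composition and every adjoint is defined. Once the type-checking is in place, each identity reduces to a one-line consequence of the definitions and of Proposition~\ref{adjoint}.
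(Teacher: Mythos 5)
Your proposal is correct and follows essentially the same route as the paper: part (1) is verified directly from the defining relation \eqref{as} by expanding the inner product on $\mathcal{K}_1\oplus\mathcal{K}_2$ coordinatewise, and parts (2)--(4) are then deduced from (1) together with Proposition \ref{adjoint}, exactly as the paper indicates (your write-up merely makes explicit the bootstrapping the paper leaves to the reader).
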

\begin{proof} To show (1) let us take $f\in \mathcal{H}$, $g_1\in \mathcal{K}_1$, $g_2\in \mathcal{K}_2$. Then
\begin{multline*}
\langle (X_{1}\diamond X_2)f, g_1\oplus g_2\rangle=\langle X_1f\oplus X_2f, g_1\oplus g_2\rangle=\langle X_1f,g_1\rangle+\langle X_2f,g_2\rangle=\\ \overline{\langle f,X_1^\sharp g_1\rangle}+\overline{\langle f,X_2^\sharp g_2\rangle}=\overline{\langle f,(X_1^\sharp\boxplus X_2^\sharp)( g_1\oplus g_2)\rangle}.
\end{multline*}
The equalities (2), (3) and (4) follow directly from (1) and Proposition \ref{adjoint}.
\end{proof}
\begin{remark}
Note that the proposition above holds if we change  antilinear operators to linear operators, ${}^\sharp$ to ${}^*$ and vice versa.
\end{remark}
Now let us consider two conjugations $C_1$, $C_2$ on $\mathcal{H}$.
Define the following actions:
\begin{equation}\label{diabox}  C_\diamond=\tfrac{1}{\sqrt{2}}C_{1}\diamond C_2 \colon \mathcal{H}\to \mathcal{H}\oplus \mathcal{H}, \, \text{and }
C_\boxplus=\tfrac{1}{\sqrt{2}} C_1\boxplus C_2 \colon \mathcal{H}\oplus \mathcal{H}\to \mathcal{H}.
\end{equation}
\begin{proposition}\label{pr23}
Let $C_1$, $C_2$ be conjugations  on $\mathcal{H}$. Then
\begin{enumerate}
\item $C_\boxplus\circ C_\diamond\colon \mathcal{H}\to \mathcal{H}$ and $C_\diamond\circ C_\boxplus\colon \mathcal{H}\oplus \mathcal{H}\to \mathcal{H}\oplus \mathcal{H} $ are linear operators;
    \item $C_\boxplus\circ C_\diamond =I_{\mathcal{H}}$;
    \item $(C_\boxplus)^\sharp=C_\diamond$ and  $(C_\diamond)^\sharp=C_\boxplus$;
\item $C_\diamond\circ C_\boxplus = Q$, where $Q$ is an orthogonal projection;
\item $\ker Q=\{C_2 f\oplus -C_1 f : f\in \mathcal{H} \}$;
\item $\operatorname{ran} Q=\{C_{2}f\oplus C_1 f : f\in \mathcal{H} \}$.
\end{enumerate}
\end{proposition}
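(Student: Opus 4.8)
The plan is to handle the six items in order, the only preliminary being that every conjugation is its own antilinear adjoint: for a conjugation $C$ one has $\langle Cf,g\rangle=\langle Cf,C(Cg)\rangle=\langle Cg,f\rangle=\overline{\langle f,Cg\rangle}$, whence $C^\sharp=C$, and in particular $C_1^\sharp=C_1$, $C_2^\sharp=C_2$. For (1), both $C_\diamond$ and $C_\boxplus$ are antilinear (real-scalar multiples of $\diamond$- and $\boxplus$-combinations of antilinear maps), so each of the two composites is a product of two antilinear operators and is therefore linear, exactly as in Proposition \ref{adjoint}(2). For (2) I would compute directly: for $f\in\mathcal{H}$, antilinearity of $C_\boxplus$ and $C_i^2=I$ give $C_\boxplus C_\diamond f=\tfrac12\bigl(C_1^2 f+C_2^2 f\bigr)=f$. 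For (3), since $\tfrac1{\sqrt2}$ is real it commutes with $(\cdot)^\sharp$, so Proposition \ref{prop2}(1) together with $C_i^\sharp=C_i$ gives $(C_\diamond)^\sharp=\tfrac1{\sqrt2}(C_1\diamond C_2)^\sharp=\tfrac1{\sqrt2}(C_1^\sharp\boxplus C_2^\sharp)=C_\boxplus$, and then $(C_\boxplus)^\sharp=C_\diamond$ follows either from Proposition \ref{prop2}(2) or by applying $(\cdot)^\sharp$ once more and invoking Proposition \ref{adjoint}(1).

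For (4) I would check the two defining properties of an orthogonal projection. Idempotency is immediate from (2): $Q^2=C_\diamond(C_\boxplus C_\diamond)C_\boxplus=C_\diamond C_\boxplus=Q$. Self-adjointness comes from Proposition \ref{adjoint}(2) applied to the antilinear factors, combined with (3): $Q^*=(C_\diamond C_\boxplus)^*=(C_\boxplus)^\sharp(C_\diamond)^\sharp=C_\diamond C_\boxplus=Q$. Thus $Q=Q^*=Q^2$ is an orthogonal projection.

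For (5) and (6) the key observation is that (2) makes $C_\diamond$ injective, since it has the left inverse $C_\boxplus$, and also forces $\operatorname{ran} C_\diamond\subseteq\operatorname{ran} Q$ because $C_\diamond=C_\diamond(C_\boxplus C_\diamond)=QC_\diamond$; together with the trivial inclusion $\operatorname{ran} Q\subseteq\operatorname{ran} C_\diamond$ this gives $\operatorname{ran} Q=\operatorname{ran} C_\diamond$. Injectivity of $C_\diamond$ likewise gives $\ker Q=\ker(C_\diamond C_\boxplus)=\ker C_\boxplus$. It then remains to read these two subspaces off from the definitions in \eqref{diabox}: $\operatorname{ran} C_\diamond$ is the set of vectors $C_1 f\oplus C_2 f$ with $f\in\mathcal{H}$, while $\ker C_\boxplus$ is the set of $f\oplus g$ satisfying $C_1 f+C_2 g=0$, which one then rewrites as a graph over a single variable to obtain the parametrizations in (5) and (6). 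As a cross-check the two resulting sets are mutually orthogonal: using $C_i^\sharp=C_i$ one has $\langle C_1 f,a\rangle+\langle C_2 f,b\rangle=\overline{\langle f,C_1 a+C_2 b\rangle}$, which vanishes precisely on $\ker C_\boxplus$, confirming $\ker Q=(\operatorname{ran} Q)^\perp$ as it must for an orthogonal projection.

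The parts (1)--(4) are essentially mechanical, resting only on antilinearity, the involution property $C_i^2=I$, and the adjoint calculus of Propositions \ref{adjoint} and \ref{prop2}. I expect the only genuinely delicate point to be the bookkeeping in (5)--(6): one must track carefully which conjugation lands in which summand when passing between the intrinsic descriptions $\operatorname{ran} Q=\operatorname{ran} C_\diamond$ and $\ker Q=\ker C_\boxplus$ and the explicit graph-type parametrizations, and it is worth verifying the final forms against the orthogonality relation $\langle C_i f,C_i g\rangle=\langle g,f\rangle$ to make sure the coordinates are assigned correctly.
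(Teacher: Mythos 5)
Your argument follows essentially the same route as the paper: (2) and (4) by direct computation, (3) from Proposition \ref{prop2}, and (5)--(6) by identifying $\operatorname{ran}Q$ with $\operatorname{ran}C_\diamond$ and $\ker Q$ with $\ker C_\boxplus$. You are in fact slightly more complete than the paper in item (4): the paper only verifies idempotency of $Q$ from its explicit formula, whereas you also supply self-adjointness via $Q^*=(C_\boxplus)^\sharp(C_\diamond)^\sharp=C_\diamond C_\boxplus=Q$, which is needed for $Q$ to be an \emph{orthogonal} projection.

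The one step you leave to ``bookkeeping'' is, however, exactly where the problem sits, and your assertion that the rewriting ``obtains the parametrizations in (5) and (6)'' does not hold as stated. Carrying out your own reduction: $\operatorname{ran}C_\diamond=\{C_1f\oplus C_2f: f\in\mathcal{H}\}$, and $\ker C_\boxplus=\{f\oplus g: C_1f+C_2g=0\}=\{C_1h\oplus(-C_2h): h\in\mathcal{H}\}$ (put $h=C_1f$, so $g=-C_2C_1f=-C_2h$). These are \emph{not} the sets $\{C_2f\oplus C_1f\}$ and $\{C_2f\oplus -C_1f\}$ appearing in (5) and (6): the two descriptions coincide only when $(C_1C_2)^2=I$, which fails already for $\mathcal{H}=\mathbb{C}$ with $C_1z=\bar z$ and $C_2z=e^{i\phi}\bar z$, $\phi\notin\{0,\pi\}$. (The indices in the printed statement appear to be transposed; the paper's own formula \eqref{projq}, which writes $Q(f\oplus g)$ as $\tfrac12(C_1u\oplus C_2u)$ with $u=C_1f+C_2g$, confirms the versions you would actually derive.) Note also that your proposed orthogonality cross-check cannot catch this, since the swapped pair $\{C_2f\oplus C_1f\}$ and $\{C_2f\oplus -C_1f\}$ is likewise mutually orthogonal; the check distinguishes correct pairings of range against kernel, not the assignment of $C_1$ and $C_2$ to the summands. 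So you should either finish the computation and record the corrected parametrizations, or explicitly flag the discrepancy with the statement.
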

\begin{proof} The statement (1) is immediate. To prove (2) let us take
 $f\in \mathcal{H}$. Then  we have
$$\tfrac{1}{2}(C_1\boxplus C_2)(C_1\diamond C_2)f=\tfrac{1}{2}(C_1\boxplus C_2)(C_1 f\oplus C_2 f)=\tfrac{1}{2}(C_1^2 f+C_2^2 f)=f.$$
The equalities in (3) follow from Proposition \ref{prop2}. Take now  $f,g\in \mathcal{H}$, then
\begin{equation}\label{projq}
\tfrac{1}{2}(C_1\diamond C_2)(C_1\boxplus C_2)(f\oplus g)=\tfrac{1}{2}((f+C_1C_2g)\oplus(g+C_2C_1 f)).
\end{equation}
Hence
\begin{multline*}
(\tfrac{1}{2}(C_1\diamond C_2)(C_1\boxplus C_2))^2 (f\oplus g)=\\
\tfrac{1}{2}(C_1\diamond C_2)(C_1\boxplus C_2)(\tfrac{1}{2}((f+C_1C_2g)\oplus(g+C_2C_1 f)))=\\
\tfrac{1}{2}((f+C_1C_2g)\oplus(g+C_2C_1 f)).\end{multline*}
So (4) holds and (5) and (6) follow from \eqref{projq}.
\end{proof}

 The next proposition is related to \eqref{sym} in the main theorem of the Section 5.
 \begin{proposition} \label{p2p5} Let $C_1, C_2$ be conjugations in $\mathcal{H}$ and let $C_\boxplus, C_\diamond$ be defined as in \eqref{diabox}. Let
 $A\in L(\mathcal{H})$ be $C_1$--symmetric and $C_2$--symmetric. Then
$$ C_\boxplus(A\oplus A)C_\diamond = A^*.$$
\end{proposition}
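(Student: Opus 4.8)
The plan is to reduce the claim to the two hypotheses rewritten as operator identities and then to verify the equality by evaluating both sides on an arbitrary vector. First I would rewrite the hypotheses: since $C_1,C_2$ are conjugations (so $C_1^2=C_2^2=I_{\mathcal H}$), the assumption that $A$ is $C_1$--symmetric and $C_2$--symmetric means precisely that
\[
C_1 A C_1=A^* \qquad\text{and}\qquad C_2 A C_2=A^*.
\]
These are the only facts about $A$ that will enter the argument, so the whole proof amounts to inserting them at the right moment.

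Next I would unwind the composition $C_\boxplus(A\oplus A)C_\diamond$ by applying it to an arbitrary $f\in\mathcal H$, carefully tracking the normalizing constants from \eqref{diabox}. By definition $C_\diamond f=\tfrac{1}{\sqrt2}\bigl(C_1 f\oplus C_2 f\bigr)$; applying the linear operator $A\oplus A$ gives $\tfrac{1}{\sqrt2}\bigl(A C_1 f\oplus A C_2 f\bigr)$; and applying $C_\boxplus$ then produces $\tfrac12\bigl(C_1 A C_1 f+C_2 A C_2 f\bigr)$, where the factor $\tfrac12$ arises from the two occurrences of $\tfrac{1}{\sqrt2}$. Substituting the rewritten hypotheses, each summand $C_i A C_i f$ equals $A^* f$, so the sum collapses to $\tfrac12\bigl(A^* f+A^* f\bigr)=A^* f$, which is the asserted identity.

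I expect no genuine obstacle here, as the statement is proved by a single direct computation. The only points requiring care are purely bookkeeping ones: matching $C_1$ with the first summand and $C_2$ with the second consistently in both $C_\diamond$ and $C_\boxplus$, and confirming that the two normalizations combine to $\tfrac12$ (which is exactly what is needed for the two identical contributions to average to $A^*$ rather than double it). As a sanity check I would note that the composite is antilinear $\circ$ linear $\circ$ antilinear, hence linear, consistently with the right-hand side $A^*$; this also fits the pattern of Proposition \ref{pr23}(2), where $C_\boxplus C_\diamond=I_{\mathcal H}$, the present statement being the natural refinement obtained by inserting a $C_1$-- and $C_2$--symmetric operator between the two factors.
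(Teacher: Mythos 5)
Your computation is correct, and it is exactly the one-line verification the paper intends: the paper in fact states Proposition \ref{p2p5} without proof, and your direct evaluation $C_\boxplus(A\oplus A)C_\diamond f=\tfrac12\left(C_1AC_1f+C_2AC_2f\right)=A^*f$ (with the two factors of $\tfrac{1}{\sqrt2}$ combining to $\tfrac12$, unaffected by antilinearity since they are real) supplies precisely the omitted argument. Nothing further is needed.
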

Recall  that any unitary operator $U\in L(\mathcal{H})$ is a product of two conjugations $C_1$, $C_2$ (\cite{[GL]}). Moreover, as it was shown in \cite{GPP}, such a unitary operator is both $C_1$ and $C_2$--symmetric. Hence any unitary operator satisfies the assumptions of Proposition \ref{p2p5} for suitable conjugations.
\newpage
\section{Conjugations in model spaces: $C_\theta$ and $C_{\alpha,\frac{\theta}{\alpha}}$}
Let $\alpha$ and $\theta$ be nonconstant inner functions such that $\alpha\leqslant\theta$. Then by \cite[Lemma 5.10]{GMR} the model space $\kdt$ can be decomposed as $\kda\oplus\alpha \kdta$ or $\kdta\oplus \frac{\theta}{\alpha}\kda$. Hence $P_\theta=P_\alpha+
\alpha P_{\frac{\theta}{\alpha}}\bar\alpha$ and $P_\theta=P_{\frac{\theta}{\alpha}}+{\frac{\theta}{\alpha}}
 P_{{\alpha}}{\frac{\bar \theta}{\bar\alpha}}$.

\begin{proposition}[Proposition 2.3, \cite{BCKP}]\label{rozklad}
Let $\alpha, \theta$ be nonconstant inner functions such that $\alpha \leqslant \theta$. If $f_1\in \kda$ and $f_2\in  \kdta$, then

\begin{enumerate}
\item $C_{\theta}(f_1+\alpha f_2)= C_{\frac{\theta}{\alpha}}f_2+\tfrac{\theta}{\alpha}C_{\alpha}f_1$,
\item $C_{\theta}(f_2+ \tfrac{\theta}{\alpha} f_1)=C_{\alpha}f_1+\alpha C_{\frac{\theta}{\alpha}}f_2$.
\end{enumerate}
\end{proposition}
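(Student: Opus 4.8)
The plan is to verify both identities by a direct computation straight from the definition of the conjugations, exploiting the factorization $\theta=\alpha\cdot\tfrac{\theta}{\alpha}$. Writing $\beta=\tfrac{\theta}{\alpha}$ for brevity, so that $\theta=\alpha\beta$ is again inner, I would recall that on $L^2$ one has $C_\theta g=\theta\bar z\bar g$, $C_\alpha g=\alpha\bar z\bar g$ and $C_\beta g=\beta\bar z\bar g$, and that $|\alpha|=|\beta|=1$ a.e.\ on $\mathbb{T}$, so that $\alpha\bar\alpha=\beta\bar\beta=1$ there. Since $f_1\in\kda\subset\kdt$ and $\alpha f_2\in\alpha\kdta\subset\kdt$, the argument $f_1+\alpha f_2$ lies in $\kdt$ and $C_\theta$ acts on it; the decompositions $\kdt=\kda\oplus\alpha\kdta=\kdta\oplus\tfrac{\theta}{\alpha}\kda$ recorded above guarantee that the two right-hand sides are the correct orthogonal pieces.

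For part (1), I would expand
$$C_\theta(f_1+\alpha f_2)=\theta\bar z\,\overline{(f_1+\alpha f_2)}=\theta\bar z\bar f_1+\theta\bar z\bar\alpha\bar f_2,$$
then substitute $\theta=\alpha\beta$ in each term. The first term becomes $\beta(\alpha\bar z\bar f_1)=\tfrac{\theta}{\alpha}\,C_\alpha f_1$, while in the second term the factor $\alpha\bar\alpha$ collapses to $1$ on $\mathbb{T}$, leaving $\beta\bar z\bar f_2=C_{\frac{\theta}{\alpha}}f_2$. Summing these gives exactly the claimed right-hand side of (1).

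Part (2) is the mirror image, obtained by interchanging the roles of $\alpha$ and $\beta$; note that $C_\theta$ itself is unchanged under this swap because $\theta=\alpha\beta=\beta\alpha$, and the two orthogonal decompositions are exchanged by it, so (2) is in fact (1) with $\alpha\leftrightarrow\beta$ and $f_1\leftrightarrow f_2$. Carrying out the computation explicitly, I would expand $C_\theta(f_2+\tfrac{\theta}{\alpha}f_1)=\theta\bar z\bar f_2+\theta\bar z\bar\beta\bar f_1$ and again write $\theta=\alpha\beta$: the first term gives $\alpha(\beta\bar z\bar f_2)=\alpha\,C_{\frac{\theta}{\alpha}}f_2$ and the second, using $\beta\bar\beta=1$ on $\mathbb{T}$, gives $\alpha\bar z\bar f_1=C_\alpha f_1$, which is the right-hand side of (2).

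There is no conceptual obstacle here; the only thing to handle carefully is the bookkeeping of the inner factors and the repeated use of the a.e.\ identities $\alpha\bar\alpha=1$ and $\beta\bar\beta=1$ on the circle, which is precisely what makes the spurious $|\alpha|^2$ and $|\beta|^2$ factors disappear and leaves clean conjugations on the smaller model spaces. The one point worth flagging is that the output of each computation automatically lands in the appropriate summand ($\tfrac{\theta}{\alpha}\kda$ and $\kdta$ in (1), and $\kda$ and $\alpha\kdta$ in (2)), so the two formulas are genuinely the expansions of $C_\theta$ relative to the two orthogonal decompositions of $\kdt$.
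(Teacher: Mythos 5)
Your computation is correct and complete: expanding $C_\theta g=\theta\bar z\bar g$, factoring $\theta=\alpha\cdot\tfrac{\theta}{\alpha}$, and cancelling $\alpha\bar\alpha=1$ (resp.\ $\tfrac{\theta}{\alpha}\cdot\overline{\tfrac{\theta}{\alpha}}=1$) a.e.\ on $\mathbb{T}$ gives exactly the two stated identities, and the outputs land in the right summands of the two decompositions of $\kdt$. The paper itself gives no proof here --- it imports the proposition verbatim from \cite{BCKP} --- so there is nothing to compare against; your direct verification is the standard argument and is a perfectly adequate self-contained substitute for the citation.
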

The orthogonal decomposition $\kdt=\kda\oplus\alpha \kdta$  suggests to consider another conjugation $C_{\alpha,\frac{\theta}{\alpha}}$ on $\kdt$ defined as
\begin{equation}\label{cat}
\begin{alignedat}{12}
 C_{\alpha,\frac{\theta}{\alpha}}:=&C_{\alpha}\oplus \alpha C_{\frac{\theta}{\alpha}}\, \bar{\alpha},\\ C_{\alpha,\frac{\theta}{\alpha}}(g_1+\alpha g_2)=&C_{\alpha}g_1+\alpha C_{\frac{\theta}{\alpha}}g_2=\alpha \bar{z}\,\bar{g_1}+\theta \bar{z}\,\bar{g_2}
 \end{alignedat}
 \end{equation} for $g_1\in \kda$, $g_2\in \kdta$. To see that $C_{\theta,\frac{\theta}\alpha}$ is a conjugation it is enough  to show that $C^2_{\theta,\frac{\theta}\alpha}=I_{\kdt}$. Namely,
\begin{multline*}
(C_\alpha\oplus\alpha C_{\frac{\theta}{\alpha}}\bar\alpha)(C_\alpha\oplus\alpha C_{\frac{\theta}{\alpha}}\bar\alpha)=P_\alpha\oplus\alpha C_{\frac{\theta}{\alpha}}\bar\alpha\alpha C_{\frac{\theta}{\alpha}}\bar\alpha=\\P_\alpha\oplus\alpha I_{\kdta}\bar\alpha=P_\alpha+\alpha P_{\frac{\theta}{\alpha}}\bar\alpha=I_{\kdt}.
\end{multline*}

 For any inner function $\theta$ and $\lambda\in \mathbb{D}$, denote

\[
k_\lambda^\theta(z)=\tfrac{1-\overline{\theta (\lambda)}\,\theta(z)}{1-\bar \lambda \,z}\qquad
\text{ and  }\qquad
\tilde k_\lambda^\theta (z)=\tfrac{\theta(z)-\theta(\lambda)}{z-\lambda}\,.
\]
Recall that $k_\lambda^\theta$ are  reproducing kernel functions for the model space $K_\theta$, i.e., $\langle f, k_\lambda^\theta\rangle=f(\lambda)$ for all $f\in K_\theta$. Assume that $\alpha\leqslant\theta$,
the conjugations $C_\theta$ and $C_{\alpha,\frac{\theta}{\alpha}}$ act on reproducing kernel functions $k_\lambda^\theta$ as follows:
\[
\qquad\quad C_\theta\, k_\lambda^\theta=\tilde k_\lambda^\theta
\quad\ \ \quad \quad\text{ and }\qquad\quad
C_{\alpha,\frac{\theta}{\alpha}}\, k_\lambda^\theta=\tilde k_\lambda^\alpha+{\alpha(\lambda)}\,\alpha\,\tilde k_\lambda^\frac{\theta}{\alpha}.\]
We have also the following "reproducing" properties. For any $f\in\kdt$:
\[
\quad \langle f, C_\theta\, k_\lambda^\theta\rangle= \overline{(C_\theta f)(\lambda)}
\quad\ \  \quad\text{ and }\qquad\quad
\langle f, C_{\alpha,\frac{\theta}{\alpha}}\, k_\lambda^\theta\rangle= \overline{(C_{\alpha,\frac{\theta}{\alpha}}f)(\lambda)}.\]
Moreover, $C_{\alpha,\frac{\theta}{\alpha}}C_\theta$ and $C_\theta C_{\alpha,\frac{\theta}{\alpha}}$ are unitary operators
(as compositions of two conjugations, see \cite{GPP}, \cite{[GL]}), which are inverses of each other. More precisely:
\begin{proposition}
Let $\alpha$, $\theta$ be nonconstant inner functions such that $\alpha\leqslant \theta$.  Then
$C_{\alpha,\frac{\theta}{\alpha}}C_\theta \colon \kdt=\kdta\oplus\tfrac{\theta}{\alpha}\kda\to\kdt=\kda\oplus\alpha \kdta$ and $C_\theta C_{\alpha,\frac{\theta}{\alpha}}\colon \kdt=\kda\oplus\alpha \kdta\to\kdt=\kdta\oplus\tfrac{\theta}{\alpha}\kda$ are unitary operators such that
\begin{enumerate}
\item $C_\theta C_{\alpha,\frac{\theta}{\alpha}}=P_{\frac{\theta}{\alpha}}\bar\alpha+\tfrac{\theta}{\alpha}P_\alpha,$
\item $C_{\alpha,\frac{\theta}{\alpha}}C_\theta =P_\alpha\tfrac{\bar\theta}{\bar\alpha}+\alpha P_{\frac{\theta}{\alpha}}.$
\end{enumerate}
\end{proposition}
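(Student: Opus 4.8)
The plan is to verify the two operator identities by computing the action of each composition on an arbitrary element of $\kdt$, written according to the relevant orthogonal decomposition, and then matching the result against the proposed projection formula. Unitarity itself needs no separate argument: as recalled immediately before the statement, each of $C_{\alpha,\frac{\theta}{\alpha}}C_\theta$ and $C_\theta C_{\alpha,\frac{\theta}{\alpha}}$ is a product of two conjugations (hence unitary), and the two products are mutual inverses. So the whole content lies in identities (1) and (2).

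For (1) I would take a generic $h=g_1+\alpha g_2\in\kdt=\kda\oplus\alpha\kdta$ with $g_1\in\kda$, $g_2\in\kdta$. Applying the definition \eqref{cat} gives $C_{\alpha,\frac{\theta}{\alpha}}h=C_\alpha g_1+\alpha C_{\frac{\theta}{\alpha}}g_2$, which is again of the form (element of $\kda$) plus $\alpha\cdot$(element of $\kdta$). Feeding this into Proposition \ref{rozklad}(1) and using that $C_\alpha$ and $C_{\frac{\theta}{\alpha}}$ are involutions, the two conjugations unwind to give $C_\theta C_{\alpha,\frac{\theta}{\alpha}}h=g_2+\tfrac{\theta}{\alpha}g_1$. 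It then remains to check that the operator $P_{\frac{\theta}{\alpha}}\bar\alpha+\tfrac{\theta}{\alpha}P_\alpha$ produces the same vector, and here I would invoke the decomposition $P_\theta=P_\alpha+\alpha P_{\frac{\theta}{\alpha}}\bar\alpha$ recorded at the start of the section: restricted to $\kdt$ it reads $I=P_\alpha+\alpha P_{\frac{\theta}{\alpha}}\bar\alpha$, which identifies $P_\alpha h=g_1$ and $P_{\frac{\theta}{\alpha}}\bar\alpha h=g_2$ as the two orthogonal components of $h$. Substituting these yields exactly $g_2+\tfrac{\theta}{\alpha}g_1$, completing (1).

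The argument for (2) is entirely parallel, carried out in the decomposition $\kdt=\kdta\oplus\tfrac{\theta}{\alpha}\kda$. I would take $h=g_2+\tfrac{\theta}{\alpha}g_1$, apply $C_\theta$ via Proposition \ref{rozklad}(2) to land in $\kda\oplus\alpha\kdta$, then apply \eqref{cat} and again use $C_\alpha^2=C_{\frac{\theta}{\alpha}}^2=I$, obtaining $C_{\alpha,\frac{\theta}{\alpha}}C_\theta h=g_1+\alpha g_2$. Matching this with $P_\alpha\tfrac{\bar\theta}{\bar\alpha}+\alpha P_{\frac{\theta}{\alpha}}$ uses the second decomposition $P_\theta=P_{\frac{\theta}{\alpha}}+\tfrac{\theta}{\alpha}P_\alpha\tfrac{\bar\theta}{\bar\alpha}$, which on $\kdt$ isolates $P_{\frac{\theta}{\alpha}}h=g_2$ and $P_\alpha\tfrac{\bar\theta}{\bar\alpha}h=g_1$.

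I expect no genuine obstacle here; the only point requiring care is bookkeeping of which decomposition each side lives in. The domains and codomains in the statement already flag that the two compositions interchange $\kda\oplus\alpha\kdta$ and $\kdta\oplus\tfrac{\theta}{\alpha}\kda$, and one must apply the correct decomposition-of-$P_\theta$ identity to the correct composition. One could instead verify the projection formulas by a direct cross-term computation, showing $P_{\frac{\theta}{\alpha}}\bar\alpha g_1=0$ for $g_1\in\kda$ and $P_\alpha\tfrac{\bar\theta}{\bar\alpha}g_2=0$ for $g_2\in\kdta$, but routing through the ready-made decompositions of $P_\theta$ sidesteps this and keeps the proof short.
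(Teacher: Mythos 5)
Your proposal is correct, and it fills in a verification that the paper actually omits: the proposition is stated right after the remark that products of two conjugations are unitary, with no further proof given. Your route through Proposition \ref{rozklad}, the involution property of $C_\alpha$ and $C_{\frac{\theta}{\alpha}}$, and the two decompositions of $P_\theta$ is exactly the intended (suppressed) argument, so there is nothing to compare beyond noting that your write-up supplies the details the authors left to the reader.
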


As a special case of Proposition \ref{pr23} we have:

\begin{proposition}
Let $\alpha$, $\theta$ be nonconstant inner functions such that $\alpha\leqslant \theta$. Define the following actions:
$$C_\diamond=\tfrac{1}{\sqrt{2}}C_{\alpha,\frac{\theta}{\alpha}}\diamond C_\theta \colon \kdt\to \kdt\oplus \kdt, \, (C_{\alpha,\frac{\theta}{\alpha}}\diamond C_\theta)f=C_{\alpha,\frac{\theta}{\alpha}}f\oplus C_\theta f $$
and
$$C_\boxplus=\tfrac{1}{\sqrt{2}} C_{\alpha,\frac{\theta}{\alpha}}\boxplus C_\theta \colon \kdt\oplus \kdt\to \kdt, \, (C_{\alpha,\frac{\theta}{\alpha}}\boxplus C_\theta)(f\oplus g)=C_{\alpha,\frac{\theta}{\alpha}}f + C_\theta g.$$
Then
\begin{enumerate}
\item $C_\boxplus\circ C_\diamond\colon \kdt\to \kdt$ and $C_\diamond\circ C_\boxplus\colon \kdt\oplus \kdt\to \kdt\oplus \kdt $ are linear operators,
\item $C_\boxplus\circ C_\diamond =I_{\kdt}$,
\item $C_\diamond\circ C_\boxplus = Q$, where $Q$ is an orthogonal projection in $\kdt\oplus\kdt$,
\item $\ker Q=\{C_{\alpha,\frac{\theta}{\alpha}}f\oplus -C_\theta f : f\in \kdt \}$,
\item $\operatorname{ran} Q=\{C_{\alpha,\frac{\theta}{\alpha}}f\oplus C_\theta f : f\in \kdt \}$.
\end{enumerate}
\end{proposition}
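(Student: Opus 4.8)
The plan is to recognize this proposition as nothing more than the specialization of Proposition \ref{pr23} to the Hilbert space $\mathcal{H}=\kdt$ with the two conjugations $C_1=C_{\alpha,\frac{\theta}{\alpha}}$ and $C_2=C_\theta$. The only hypotheses of Proposition \ref{pr23} are that $C_1$ and $C_2$ be conjugations on $\mathcal{H}$, so the first thing I would do is confirm that both maps qualify. For $C_\theta$ this is already recorded, since $C_\theta$ restricts to a conjugation on $\kdt$. For $C_{\alpha,\frac{\theta}{\alpha}}$ the computation preceding \eqref{cat} shows $C_{\alpha,\frac{\theta}{\alpha}}^2=I_{\kdt}$; antilinearity and the isometric identity $\langle C_{\alpha,\frac{\theta}{\alpha}}f,C_{\alpha,\frac{\theta}{\alpha}}g\rangle=\langle g,f\rangle$ are inherited from $C_\alpha$ and $C_{\frac{\theta}{\alpha}}$ through the orthogonal decomposition $\kdt=\kda\oplus\alpha\kdta$, so $C_{\alpha,\frac{\theta}{\alpha}}$ is indeed a conjugation.

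Once the hypotheses are in place, statements (1), (2) and (3) are immediate transcriptions of parts (1), (2) and (4) of Proposition \ref{pr23}: $C_\boxplus\circ C_\diamond$ and $C_\diamond\circ C_\boxplus$ are linear, $C_\boxplus\circ C_\diamond=I_{\kdt}$, and $Q:=C_\diamond\circ C_\boxplus$ is an orthogonal projection on $\kdt\oplus\kdt$. Nothing beyond the substitution $C_1=C_{\alpha,\frac{\theta}{\alpha}}$, $C_2=C_\theta$ is needed here.

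For (4) and (5) I would read the kernel and range off the explicit formula \eqref{projq}, which in the present notation gives
\begin{equation*}
Q(f\oplus g)=\tfrac12\bigl((f+C_{\alpha,\frac{\theta}{\alpha}}C_\theta\,g)\oplus(g+C_\theta C_{\alpha,\frac{\theta}{\alpha}}\,f)\bigr).
\end{equation*}
Since the range of a projection is its set of fixed points, solving $Q(f\oplus g)=f\oplus g$ forces $f=C_{\alpha,\frac{\theta}{\alpha}}C_\theta\,g$, and the reparametrization $g=C_\theta h$ turns this into $\{C_{\alpha,\frac{\theta}{\alpha}}h\oplus C_\theta h:h\in\kdt\}$, which is (5). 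Likewise $Q(f\oplus g)=0$ forces $f=-C_{\alpha,\frac{\theta}{\alpha}}C_\theta\,g$, and the reparametrization $g=-C_\theta h$ yields $\{C_{\alpha,\frac{\theta}{\alpha}}h\oplus -C_\theta h:h\in\kdt\}$, which is (4); here I use that $C_\theta$ and $C_{\alpha,\frac{\theta}{\alpha}}$ are involutions to pass freely between the two parametrizations.

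The proof has no genuine obstacle; it is a verification. The only point that deserves attention is the bookkeeping in (4) and (5): the kernel and range emerge naturally in the single-parameter form $\{-C_{\alpha,\frac{\theta}{\alpha}}C_\theta\,g\oplus g\}$ and $\{C_{\alpha,\frac{\theta}{\alpha}}C_\theta\,g\oplus g\}$, and one must apply the involution identities to recast them in the symmetric form asked for in the statement, taking care that the conjugation appearing in the first summand is $C_{\alpha,\frac{\theta}{\alpha}}$ and in the second is $C_\theta$, in that order.
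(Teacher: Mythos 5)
Your proposal is correct and is essentially the paper's own argument: the paper states this proposition with no separate proof, introducing it precisely as a special case of Proposition \ref{pr23} with $C_1=C_{\alpha,\frac{\theta}{\alpha}}$ and $C_2=C_\theta$, so your verification that both maps are conjugations plus the substitution is exactly what is intended. Your extra step of reading (4) and (5) directly off \eqref{projq} rather than quoting items (5)--(6) of Proposition \ref{pr23} verbatim is sound and in fact yields the ordering $C_{\alpha,\frac{\theta}{\alpha}}f\oplus \pm C_\theta f$ exactly as stated here.
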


\section{$M_z$--conjugations in $L^2$}

In this section we will show that the conjugations $C_\theta$ and $C_{\alpha,\frac{\theta}{\alpha}}$ are in a certain sense unique.

Since we are motivated by truncated Toeplitz operators, we will concentrate on conjugations for which the multiplication by the independent variable $M_z$ is $C$--symmetric. Let $J$ denote the complex conjugation in $L^2$, that is $J\colon L^2\to L^2$, $Jf=\bar f$ for $f\in L^2$. For $\varphi\in L^\infty$, denote by $M_\varphi\colon L^2\to L^2$ a multiplication operator $M_\varphi f=\varphi f$, $f\in L^2$.  A conjugation $C$ on $L^2$ will be called an {\it $M$--conjugation} if $M_\varphi C=CM_{\bar \varphi}$ (i.e., $M_\varphi$ is $C$--symmetric) for all $\varphi\in L^\infty$, and $C$ will be called an {\it $M_z$--conjugation} if $M_z C=C M_{\bar z}$.

 The following theorem fully characterizes $M$--conjugations in $L^2$. It also says that in fact the definitions of $M$--conjugation and $M_z$--conjugation are equivalent.
\begin{theorem}\label{c1}
Let $C$ be a conjugation in $L^2$. Then the following are equivalent:
\begin{enumerate}
\item $M_\varphi C=C M_{\bar\varphi}$ for all $\varphi\in L^\infty$ ($C$ is an $M$--conjugation),
\item $M_z C=C M_{\bar z}$ ($C$ is an $M_z$--conjugation),
 \item there is $\psi\in L^\infty$, with $|\psi|=1$, such that $C=M_\psi J$.
 \end{enumerate}
\end{theorem}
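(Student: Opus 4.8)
The plan is to prove the chain $(1)\Rightarrow(2)\Rightarrow(3)\Rightarrow(1)$, where the only substantial step is $(2)\Rightarrow(3)$. The implication $(1)\Rightarrow(2)$ is immediate: specialize $\varphi=z$. For $(3)\Rightarrow(1)$ I would compute directly, using the elementary commutation rule $JM_{\bar\varphi}=M_\varphi J$ (valid since $J(\bar\varphi f)=\varphi\bar f=M_\varphi Jf$). With $C=M_\psi J$ this gives $CM_{\bar\varphi}=M_\psi JM_{\bar\varphi}=M_\psi M_\varphi J=M_{\psi\varphi}J=M_\varphi M_\psi J=M_\varphi C$, which is exactly (1); along the way one verifies that $C=M_\psi J$ is genuinely a conjugation when $|\psi|=1$ (antilinearity is clear, $C^2=M_{|\psi|^2}=I$, and $\langle Cf,Cg\rangle=\langle g,f\rangle$ by a one-line integral computation).

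For the main implication $(2)\Rightarrow(3)$, the idea is to \emph{linearize} $C$ by composing it with the reference conjugation $J$. Set $U=CJ$. As a composition of two antilinear isometric involutions, $U$ is a linear surjective isometry, i.e. a unitary operator on $L^2$, and $C=UJ$. I would then translate the hypothesis $M_zC=CM_{\bar z}$ into a commutation relation for $U$: substituting $C=UJ$ and using $JM_{\bar z}=M_zJ$ turns $M_zUJ=UJM_{\bar z}=UM_zJ$ into $M_zU=UM_z$ after cancelling the invertible factor $J$. Since $M_z$ is unitary with $M_z^{-1}=M_{\bar z}$, commuting with $M_z$ forces $U$ to commute with $M_{\bar z}$ as well.

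The crux is now to identify $U$ as a multiplication operator. Here I would invoke the standard fact that the multiplication algebra $\{M_\psi:\psi\in L^\infty\}$ is maximal abelian in $L(L^2)$, equivalently that the commutant of the bilateral shift $M_z$ is exactly $\{M_\psi:\psi\in L^\infty\}$: since $U$ commutes with both $M_z$ and $M_{\bar z}$, it commutes with the von Neumann algebra they generate, which is all of $L^\infty$, so $U=M_\psi$ for some $\psi\in L^\infty$. Concretely, setting $\psi:=U1=C1$, the identity $Uz^n=M_z^nU1=\psi z^n$ (for all $n\in\mathbb{Z}$, using commutation with $M_z$ and $M_{\bar z}$) shows $U=M_\psi$ on the dense linear span of $\{z^n:n\in\mathbb{Z}\}$, hence everywhere. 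Finally, because $U$ is unitary, $M_\psi^*M_\psi=M_{|\psi|^2}=I$ forces $|\psi|=1$ a.e., and $C=UJ=M_\psi J$, giving (3).

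The main obstacle, and the only point requiring more than antilinear-adjoint bookkeeping, is the identification $U=M_\psi$ from the commutation relations. One should be careful that it is precisely the unitarity of $M_z$ (which yields $M_{\bar z}=M_z^{-1}$, hence automatic commutation with $M_{\bar z}$) that gives access to the full maximal abelian algebra $L^\infty$; the passage from $C$ to the unitary $U$ and back, together with the two easy implications, is then routine.
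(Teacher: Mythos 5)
Your proof is correct and follows essentially the same route as the paper's: both linearize $C$ by composing with $J$ (you use $U=CJ$, the paper uses $JC$), reduce the hypothesis to commutation with the bilateral shift $M_z$, and invoke the fact that the commutant of $M_z$ on $L^2$ is the multiplication algebra $\{M_\psi:\psi\in L^\infty\}$ to get $C=M_\psi J$. The only cosmetic difference is that you extract $|\psi|=1$ from the unitarity of $U$, whereas the paper derives it from $C^2=I$.
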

\begin{proof} It is enough to show that $(2)\Rightarrow (3)$.
Assume that $CM_z=M_{\bar z}C$. Then $JCM_z=JM_{\bar z}C=M_z JC$. It means that the linear operator $JC$ commutes with $M_z$. By \cite[Theorem 3.2]{RR} $JC=M_{\bar\psi}$ for some $\psi\in L^\infty$. Hence $C=JM_{\bar\psi}=M_{\psi}J$.

Since $C$ is a conjugation, we have $C^2=I_{L^2}$. Therefore for all $f\in L^2$ we have
$$f=C^2 f=M_\psi J M_\psi J f=M_\psi J({\psi\bar f})=|\psi|^2 f,$$
which implies that $|\psi|=1$ a.e.%
%
\end{proof}

Now we study the invariant subspaces of $M_z$--conjugations and their relations with orthogonal decompositions of model spaces.

\begin{theorem}\label{bb} Let $\alpha, \gamma,  \theta$ be inner functions ($\alpha,\theta$ nonconstant) such that $\gamma\alpha\leqslant \theta$.
Let $C$ be a conjugation in $L^2$ such that $M_z C=C M_{\bar z}$. Assume that  $C(\gamma\kda)\subset\kdt$.  Then there is an inner function $\beta$ such that $C=C_\beta$, with $\gamma\alpha\leqslant\beta\leqslant\gamma \theta$.
\end{theorem}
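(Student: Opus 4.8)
The plan is to start from the characterization of $M_z$--conjugations in Theorem \ref{c1}: since $M_z C = C M_{\bar z}$, there is $\psi\in L^\infty$ with $|\psi|=1$ a.e.\ and $C = M_\psi J$. Setting $\beta := z\psi$ we have $|\beta|=1$ a.e.\ and $\psi = \bar z\beta$, so $C_\beta = M_{\beta\bar z}J = M_\psi J = C$ as soon as $\beta$ turns out to be inner. Thus everything reduces to showing that $\beta$ is inner and satisfies $\gamma\alpha\leqslant\beta\leqslant\gamma\theta$. To exploit the hypothesis I first rewrite $C(\gamma K_\alpha)$: since $C_\alpha$ is a conjugation of $K_\alpha$, one has $\overline{K_\alpha} = z\bar\alpha K_\alpha$, so that for $f\in K_\alpha$ the image $C(\gamma f)=\psi\bar\gamma\bar f$ runs over $\psi\bar\gamma\, z\bar\alpha K_\alpha = \beta\,\overline{\gamma\alpha}\,K_\alpha$. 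Writing $\delta := \beta\,\overline{\gamma\alpha}$ (a unimodular function), the assumption $C(\gamma K_\alpha)\subset K_\theta$ becomes the single clean inclusion $\delta K_\alpha\subset K_\theta$.

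The technical heart is an auxiliary lemma: if $u\in L^\infty$ with $|u|=1$ a.e.\ and $u K_\alpha\subset H^2$, then $u$ is inner. I would prove it by testing on the two explicit elements $k_0^\alpha = 1-\overline{\alpha(0)}\,\alpha$ and $\tilde k_0^\alpha = \bar z(\alpha-\alpha(0))$ of $K_\alpha$. Indeed $u k_0^\alpha\in H^2$ and $z\,u\tilde k_0^\alpha = u\alpha-\alpha(0)u\in H^2$; the combination $\alpha(0)\,(u k_0^\alpha)+(u\alpha-\alpha(0)u)$ equals $(1-|\alpha(0)|^2)\,u\alpha\in H^2$, and $|\alpha(0)|<1$ because $\alpha$ is nonconstant, so $u\alpha\in H^2$. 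Feeding this back into $u k_0^\alpha\in H^2$ gives $u\in H^2$, whence $u\in H^\infty$ is inner (since $|u|=1$). Applying the lemma to $\delta$, using only $\delta K_\alpha\subset K_\theta\subset H^2$, shows $\delta$ is inner; therefore $\beta=\gamma\alpha\delta$ is a product of inner functions, hence inner, and $\gamma\alpha\leqslant\beta$.

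It remains to prove the upper estimate $\beta\leqslant\gamma\theta$, equivalently $\alpha\delta\leqslant\theta$, and this is the step I expect to be the main obstacle, since so far only membership in $H^2$ (and not the orthogonality to $\theta H^2$) has been used. The idea is a duality trick reducing it to the very same lemma. Using $K_\theta = H^2\cap\theta\overline{zH^2}$, the inclusion $\delta K_\alpha\subset K_\theta$ yields $\bar\theta\delta f\in\overline{zH^2}$ for every $f\in K_\alpha$; conjugating and once more invoking $\overline{K_\alpha}=z\bar\alpha K_\alpha$ transforms this into $\theta\,\overline{\alpha\delta}\,K_\alpha\subset H^2$. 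Thus $\mu := \theta\,\overline{\alpha\delta}$ is unimodular with $\mu K_\alpha\subset H^2$, so by the lemma $\mu$ is inner, i.e.\ $\alpha\delta\leqslant\theta$. Since $\gamma\theta=\gamma\alpha\delta\cdot(\theta\,\overline{\alpha\delta})=\beta\mu$ with $\mu$ inner, we get $\beta\leqslant\gamma\theta$. Combining the two divisibilities with $\beta$ inner and $C=C_\beta$ completes the argument; the hypothesis $\gamma\alpha\leqslant\theta$ serves mainly to make the target inclusion natural and does not enter the derivation in an essential way.
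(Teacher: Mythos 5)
Your proof is correct and follows essentially the same route as the paper: reduce to $C=M_\psi J$ via Theorem \ref{c1}, test on the kernel functions $k_0^\alpha$ and $\tilde k_0^\alpha$ at $0$, and upgrade $H^2$--membership of the unimodular function $\delta=z\psi\overline{\gamma\alpha}$ (the paper's $\beta_1$) and of $\theta\overline{\alpha\delta}$ to innerness. The only differences are organizational --- you package both halves into one reusable lemma (proved by a linear combination of the two kernel tests rather than by dividing by $(1-\overline{\alpha(0)}\alpha)$) and you invoke $K_\theta=H^2\cap\theta\,\overline{zH^2}$ where the paper applies $C_\theta$ --- so no further comparison is needed.
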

\begin{proof} Recall the standard notation for the reproducing kernel functions at $0$ in $\kda$, namely, $k_0^\alpha=1-\overline{\alpha(0)}{\alpha}$ and $\tilde k_0^\alpha=C_\alpha k_0^\alpha=\bar z(\alpha-\alpha(0))$.
By Theorem \ref{c1} we know that $C=M_\psi J$ for some function $\psi\in L^\infty$, $|\psi|=1$. Hence
$$\kdt \ni C (\gamma\tilde k_0^\alpha)= M_\psi J (\gamma\tilde k_0^\alpha)=\psi \overline{\gamma\bar z(\alpha-\alpha(0))}=\bar\gamma\bar\alpha z\psi(1-\overline{\alpha(0)}\alpha).$$
Thus there is $h\in \kdt$ such that $h=\bar\gamma\bar\alpha z\psi(1-\overline{\alpha(0)}\alpha)$.
Since $(1-\overline{\alpha(0)}\alpha)^{-1}$ is a bounded analytic function, we have
$$\bar \gamma\bar\alpha z\psi=h(1-\overline{\alpha(0)}\alpha)^{-1}\in H^2.$$
Since $\beta_1=\bar\gamma\bar\alpha z\psi\in H^2$ and $|\bar \gamma\bar\alpha z\psi|=1$ a.e. on $\mathbb{T}$, it has to be an inner function.

On the other hand, we have similarly
 $$\kdt\ni C_\theta C (\gamma k_0^\alpha) =C_\theta(\psi \overline{\gamma (1-\overline{\alpha(0)}\alpha)}=\theta  \gamma\bar z\bar\psi (1-\overline{\alpha(0)}\alpha),$$
 and ${\theta}{\gamma} \bar z\bar\psi\in H^2$. Hence \[{H^2}\ni {\theta}{\gamma} \bar z\bar \psi=\tfrac{\theta}{\alpha}\overline{\bar \gamma\,\bar\alpha z\psi}=\tfrac{\theta}{\alpha}\,\overline{\beta_1}.\] But this is only possible when $\beta_1$ divides $\tfrac{\theta}{\alpha} $. Hence $\psi=\gamma\alpha \beta_1 \bar z=\beta\bar z$ with $\gamma\alpha\leqslant\beta\leqslant \gamma\theta$. Finally, we have  $C=C_\beta$.
\end{proof}

Taking $\gamma=1$ and $\alpha=\theta$ we conclude that the conjugation $C_\theta$ is the only $M_z$--conjugation in $L^2$ which preserves the model space $\kdt$.

\begin{theorem}\label{ct}
Let $C$ be an $M_z$--conjugation in $L^2$ (i.e., $M_z C=C M_{\bar z}$). Assume that  $C(\kdt)\subset\kdt$ for some nonconstant inner function $\theta$. Then $C=C_\theta$.
\end{theorem}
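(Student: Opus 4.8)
The plan is to deduce Theorem \ref{ct} as the special case of Theorem \ref{bb} obtained by setting $\gamma = 1$ and $\alpha = \theta$, and then to show that the resulting freedom in the divisor $\beta$ collapses to a single choice. First I would check that the hypotheses of Theorem \ref{bb} are met: with $\gamma = 1$ and $\alpha = \theta$ the divisibility condition $\gamma\alpha \leqslant \theta$ becomes $\theta \leqslant \theta$, which holds trivially, and the invariance assumption $C(\gamma \kda) \subset \kdt$ becomes exactly the given hypothesis $C(\kdt) \subset \kdt$. Both $\alpha = \theta$ and $\theta$ are nonconstant by assumption, so Theorem \ref{bb} applies directly.

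Applying the theorem, I obtain an inner function $\beta$ with $C = C_\beta$ and the sandwich condition $\gamma\alpha \leqslant \beta \leqslant \gamma\theta$. Substituting $\gamma = 1$, $\alpha = \theta$ turns this into $\theta \leqslant \beta \leqslant \theta$, which forces $\beta = \theta$ (two inner functions each dividing the other differ only by a unimodular constant, which in the normalized setting gives $\beta = \theta$). Hence $C = C_\theta$, as claimed. This is the entire argument: the work has already been done in Theorem \ref{bb}, and the present statement is the degenerate endpoint case where the interval of admissible divisors shrinks to a point.

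The only step that requires any care is confirming that the pinching $\theta \leqslant \beta \leqslant \theta$ genuinely yields $C_\beta = C_\theta$ as conjugations on $L^2$, rather than merely equality up to a scalar. Since $C_\beta f = \beta \bar z \bar f$ and $C_\theta f = \theta \bar z \bar f$, equality of the two conjugations is equivalent to $\beta = \theta$ as inner functions, and mutual divisibility of inner functions forces equality after the standard normalization, so no genuine obstacle arises. I do not expect any substantive difficulty here; the proof is essentially a one-line specialization, and the real content lives in the preceding Theorem \ref{bb}.
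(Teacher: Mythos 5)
Your proof is correct and is precisely the paper's own argument: the paper obtains Theorem \ref{ct} as the specialization $\gamma=1$, $\alpha=\theta$ of Theorem \ref{bb}, with the sandwich condition $\gamma\alpha\leqslant\beta\leqslant\gamma\theta$ collapsing to $\beta=\theta$ (up to the usual unimodular normalization of inner functions). Nothing further is needed.
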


\begin{remark}\label{bbb}
Let us consider nonconstant inner functions $\alpha$, $\beta$, $\theta$ such that
$\alpha\leqslant\beta\leqslant \theta$. Then we have the decompositions:
\[\kdt=\kdb\oplus\beta K_{\frac{\theta}\beta}=\kda\oplus\alpha K_{\frac{\beta}\alpha}\oplus\beta K_{\frac{\theta}\beta}.\]
Observe that $C_\beta(\kda)\subset\kdb$. Let $\tilde C$ be any conjugation on $\beta K_{\frac{\theta}\beta}$. Then
$C_\beta{}|_{K_\beta}\oplus \tilde C$ is a conjugation on $\kdt$.
\end{remark}
The following is a consequence of Theorem \ref{bb} and Remark \ref{bbb}.
\begin{proposition}\label{abb}Let $\alpha\leqslant \theta$ be some nonconstant inner functions.
Let $C$ be an $M_z$--conjugation in $L^2$ (i.e., $M_z C=C M_{\bar z}$). Assume that  $C(\kda)\subset\kdt$. Let $\tilde C$ be a conjugation on $\kdt$ such that $C{|_{\kda}}=\tilde C|_{\kda}$. Then there is an inner function $\beta$ with $\alpha\leqslant\beta\leqslant \theta$ and a certain conjugation $\tilde{\tilde C}$ on $\beta K_{\frac{\theta}{\beta}}$ such that  $\tilde C_{\theta}= C_\beta \oplus \tilde{\tilde C}$.
\end{proposition}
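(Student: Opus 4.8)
The plan is to apply Theorem \ref{bb} with the specific choice $\gamma = 1$, which under the hypothesis $\alpha \leqslant \theta$ is exactly the special case that hands us an inner function $\beta$ with $\alpha \leqslant \beta \leqslant \theta$ and the identity $C = C_\beta$ on all of $L^2$. Indeed, Theorem \ref{bb} requires an $M_z$--conjugation $C$ with $C(\gamma \kda) \subset \kdt$; with $\gamma = 1$ this is precisely our assumption $C(\kda) \subset \kdt$, and the conclusion is that $C = C_\beta$ for some inner $\beta$ with $\alpha \leqslant \beta \leqslant \theta$. So the global conjugation $C$ on $L^2$ is forced to be $C_\beta$; the remaining work is entirely about the conjugation $\tilde C$ living only on $\kdt$.

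First I would record what this says about $\tilde C$ on the summand $\kda$. By hypothesis $\tilde C|_{\kda} = C|_{\kda} = C_\beta|_{\kda}$, and since $\alpha \leqslant \beta$, Remark \ref{bbb} tells us that $C_\beta(\kda) \subset \kdb \subset \kdt$, so this restriction is consistent with $\tilde C$ being a conjugation on $\kdt$. The heart of the matter is to extend the description from $\kda$ to all of $\kdb$, and then to handle the orthogonal complement. Using the decomposition from Remark \ref{bbb}, namely $\kdt = \kda \oplus \alpha K_{\frac{\beta}{\alpha}} \oplus \beta K_{\frac{\theta}{\beta}}$, I would argue that $\tilde C$ must map $\kdb = \kda \oplus \alpha K_{\frac{\beta}{\alpha}}$ into itself and coincide with $C_\beta$ there, while restricting to some conjugation $\tilde{\tilde C}$ on the last summand $\beta K_{\frac{\theta}{\beta}}$.

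The step I expect to be the main obstacle is justifying that $\tilde C$ agrees with $C_\beta$ not merely on $\kda$ but on the full model space $\kdb$, i.e.\ also on the intermediate piece $\alpha K_{\frac{\beta}{\alpha}}$. Knowing $\tilde C = C_\beta$ only on $\kda$ does not by itself pin down the action on $\alpha K_{\frac{\beta}{\alpha}}$ unless one exploits extra structure. The natural route is to use the "reproducing" properties of $C_\beta$ together with the reproducing kernels $k_\lambda^\beta$: since $\kda \subset \kdb$ is generated in an appropriate sense, and since $C_\beta$ sends $k_\lambda^\alpha$--type kernels to explicit $\tilde k$--functions, one can test $\tilde C$ against these and propagate the equality $\tilde C = C_\beta$ across $\kdb$. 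An alternative, cleaner line is to observe that $C_\beta|_{\kdb}$ is the canonical conjugation on the model space $\kdb$ and invoke Theorem \ref{ct} (or its proof) applied to the subspace $\kdb$: the conjugation $\tilde C$, suitably restricted, becomes an $M_z$--type conjugation preserving $\kdb$, forcing $\tilde C|_{\kdb} = C_\beta|_{\kdb}$. Once this is secured, the decomposition $\tilde C = C_\beta \oplus \tilde{\tilde C}$ (with $\tilde{\tilde C} = \tilde C|_{\beta K_{\frac{\theta}{\beta}}}$ a conjugation on the remaining summand, guaranteed by Remark \ref{bbb}) follows immediately, completing the proof.
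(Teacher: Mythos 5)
Your opening move is exactly the paper's: the paper offers no argument for this proposition beyond the sentence that it ``is a consequence of Theorem \ref{bb} and Remark \ref{bbb}'', and applying Theorem \ref{bb} with $\gamma=1$ to conclude $C=C_\beta$ for some inner $\beta$ with $\alpha\leqslant\beta\leqslant\theta$ is precisely the Theorem \ref{bb} part of that; Remark \ref{bbb} then supplies conjugations on $\kdt$ of the form $C_\beta|_{\kdb}\oplus\tilde{\tilde C}$, which agree with $C$ on $\kda$. Up to that point you are on the intended track.

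The step you yourself flag as the main obstacle --- upgrading $\tilde C=C_\beta$ from $\kda$ to all of $\kdb$ --- is, however, a genuine gap, and neither of your two proposed repairs can close it. Theorem \ref{ct} is not applicable: the hypotheses make only $C$ an $M_z$--conjugation, while $\tilde C$ is an arbitrary conjugation of $\kdt$ known to agree with $C$ on $\kda$ alone, so there is no $M_z$--conjugation preserving $\kdb$ in sight. Kernel tests fare no better: the data determine $\tilde C$ on $\kda$ and, because $\tilde C$ is an involution, on $\tilde C(\kda)=\tfrac{\beta}{\alpha}\kda$, but $\kda+\tfrac{\beta}{\alpha}\kda$ is in general a proper subspace of $\kdb$, and on its orthogonal complement $\tilde C$ is simply not determined by the hypotheses. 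Indeed the universal reading of the statement fails: take $\theta=z^3$, $\alpha=z$, $C=C_{z^3}$ (so $\beta=z^3$) and $\tilde C(a+bz+cz^2)=\bar c+\mu\bar b z+\bar a z^2$ with $|\mu|=1$, $\mu\neq 1$; this is a conjugation on $K_{z^3}$ agreeing with $C$ on $K_z$, yet it is not of the form $C_{\beta'}\oplus\tilde{\tilde C}$ for any $z\leqslant\beta'\leqslant z^3$. So the proposition can only be meant existentially (some extension of $C|_{\kda}$ to a conjugation on $\kdt$ has the stated form --- immediate from Theorem \ref{bb} plus Remark \ref{bbb}), or it requires the stronger two-sided hypotheses of Theorem \ref{cta}; the plan you sketch, aimed at proving the universal version, cannot be completed.
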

The following lemma will be used to prove the next theorem.

\begin{lemma}\label{xxa}
Let $\alpha_1$, $\alpha_2$ be nonconstant inner functions and let $\gamma_1,\gamma_2$ be inner functions such that $\gamma_1\leqslant\alpha_1$ and $\gamma_2\leqslant\alpha_2$. Assume that $\gamma_1\,K_{\alpha_2}\oplus \gamma_2\,K_{\alpha_1}= K_{\alpha_1\alpha_2}$. Then $\gamma_1=1$,   $\gamma_2=\alpha_2$ or $\gamma_1=\alpha_1$,   $\gamma_2=1$.
\end{lemma}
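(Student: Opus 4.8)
The plan is to compute the orthogonal complement of $\gamma_1 K_{\alpha_2}$ inside $K_{\alpha_1\alpha_2}$ in two different ways and compare; throughout I read $\oplus$ as an \emph{orthogonal} direct sum, as elsewhere in the paper. First I would record a refined decomposition valid for every inner divisor $\gamma_1\leqslant\alpha_1$. Setting $\delta_1=\tfrac{\alpha_1}{\gamma_1}$ (so that $\gamma_1\delta_1=\alpha_1$) and iterating the standard splitting $H^2=K_u\oplus uH^2$ three times, I get
\[
H^2=K_{\gamma_1}\oplus\gamma_1 K_{\alpha_2}\oplus\gamma_1\alpha_2 K_{\delta_1}\oplus\alpha_1\alpha_2 H^2,
\]
and hence, removing the last summand,
\[
K_{\alpha_1\alpha_2}=K_{\gamma_1}\oplus\gamma_1 K_{\alpha_2}\oplus\gamma_1\alpha_2 K_{\delta_1}.
\]
Comparing this with the hypothesis $K_{\alpha_1\alpha_2}=\gamma_1 K_{\alpha_2}\oplus\gamma_2 K_{\alpha_1}$ and taking the orthogonal complement of the common summand $\gamma_1 K_{\alpha_2}$ inside $K_{\alpha_1\alpha_2}$, I obtain
\[
\gamma_2 K_{\alpha_1}=K_{\gamma_1}\oplus\gamma_1\alpha_2 K_{\delta_1}.
\]

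The key tool is a telescoping fact that I would isolate as a sub-lemma: \emph{if $u$ is a nonconstant inner function and $\eta$ is an inner function with $K_u\subseteq\eta H^2$, then $\eta=1$.} Indeed, for $f\in K_u\cap\eta H^2$ one has $\bar\eta f\in H^2$, and for every $\phi\in H^2$ we have $\langle\bar\eta f,u\phi\rangle=\langle f,\eta u\phi\rangle=0$ since $f\perp uH^2$; thus $\bar\eta f\in K_u$ and $f=\eta(\bar\eta f)\in\eta K_u$. So $K_u\subseteq\eta K_u$, and applying the isometry $M_\eta$ repeatedly gives $K_u\subseteq\bigcap_{n\geqslant 0}\eta^n H^2=\{0\}$ whenever $\eta\neq 1$. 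As $u$ is nonconstant, $K_u\neq\{0\}$, forcing $\eta=1$.

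Now the left-hand side of the last displayed identity lies in $\gamma_2 H^2$, so in particular $K_{\gamma_1}\subseteq\gamma_2 H^2$; the sub-lemma (with $u=\gamma_1$) then forces $\gamma_1=1$ or $\gamma_2=1$. If $\gamma_1=1$, the hypothesis becomes $K_{\alpha_2}\oplus\gamma_2 K_{\alpha_1}=K_{\alpha_1\alpha_2}$, which I compare with the standard decomposition $K_{\alpha_1\alpha_2}=K_{\alpha_2}\oplus\alpha_2 K_{\alpha_1}$; cancelling $K_{\alpha_2}$ gives $\gamma_2 K_{\alpha_1}=\alpha_2 K_{\alpha_1}$. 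Writing $\alpha_2=\gamma_2\eta$ and removing the isometry $M_{\gamma_2}$ turns this into $K_{\alpha_1}=\eta K_{\alpha_1}\subseteq\eta H^2$, so the sub-lemma (with $u=\alpha_1$) gives $\eta=1$, i.e.\ $\gamma_2=\alpha_2$. The case $\gamma_2=1$ is symmetric under interchanging the indices $1$ and $2$ (as $K_{\alpha_1\alpha_2}=K_{\alpha_2\alpha_1}$) and yields $\gamma_1=\alpha_1$. These are exactly the two alternatives in the statement.

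I expect the sub-lemma to be the main obstacle: the telescoping chain $K_u\subseteq\eta K_u\subseteq\eta^2 K_u\subseteq\cdots$ together with $\bigcap_{n}\eta^n H^2=\{0\}$ is what lets both the orthogonality step and the final cancellation $\gamma_2 K_{\alpha_1}=\alpha_2 K_{\alpha_1}\Rightarrow\gamma_2=\alpha_2$ go through uniformly, in particular for singular inner functions, where the naive counting of zeros is unavailable.
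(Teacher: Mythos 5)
Your proof is correct and follows essentially the same route as the paper: the triple orthogonal decomposition of $K_{\alpha_1\alpha_2}$ obtained by iterating $H^2=K_u\oplus uH^2$, the resulting inclusion $K_{\gamma_1}\subseteq\gamma_2H^2$ (the paper derives the symmetric inclusion $K_{\gamma_2}\subseteq\gamma_1H^2$), and the same key divisibility fact, which the paper cites from \cite[Lemma 4.2]{BCKP} while you prove it from scratch via the telescoping chain $K_u\subseteq\eta^nH^2$. The only substantive difference is that you make explicit the final cancellation $\gamma_2K_{\alpha_1}=\alpha_2K_{\alpha_1}\Rightarrow\gamma_2=\alpha_2$, which the paper merely asserts.
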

\begin{proof} Recall that inner functions are identified up to multiplication by a constant and let us assume that neither $\gamma_1$ nor $\gamma_2$ is constant.
By \cite[Theorem 5.11]{GMR} we can decompose the space $K_{\alpha_1\alpha_2}$ in two ways
\[K_{\alpha_1\alpha_2}=K_{\gamma_1}\oplus\gamma_1\,K_{\alpha_2}\oplus \gamma_1\alpha_2\,K_{\frac{\alpha_1}{\gamma_1}}=K_{\gamma_2}\oplus\gamma_2\,K_{\alpha_1}\oplus \gamma_2\alpha_1\,K_{\frac{\alpha_2}{\gamma_2}}. \]
Since $\gamma_1\,K_{\alpha_2}\oplus \gamma_2\,K_{\alpha_1}= K_{\alpha_1\alpha_2}$, we have
\[\gamma_1\,K_{\alpha_2}=K_{\gamma_2}\oplus \gamma_2\alpha_1\,K_{\frac{\alpha_2}{\gamma_2}}. \]
Therefore \[K_{\gamma_2}\subset \gamma_1\,K_{\alpha_2}\subset \gamma_1H^2.\]
It follows, as in \cite[Lemma 4.2]{BCKP} that $\gamma_1$ has to be a constant or $K_{\gamma_2}=\{0\}$, i.e., $\gamma_2$ is a constant, and so we obtain a contradiction.

If $\gamma_1=1$, then, by \cite[Theorem 5.11]{GMR}, we have
\[K_{\alpha_2}\oplus \gamma_2\,K_{\alpha_1}= K_{\alpha_1\alpha_2}=K_{\alpha_2}\oplus \alpha_2\,K_{\alpha_1},\]
hence $\gamma_2=\alpha_2$. If $\gamma_1$ is not a constant, then we obtain $\gamma_1=\alpha_1$, $\gamma_2=1$, analogously.
\end{proof}

The definition of the conjugation $C_{\alpha,\frac{\theta}{\alpha}}$ is natural in  view of the orthogonal decomposition  $K_\theta=\kda\oplus\alpha K_{\frac{\theta}{\alpha}}$. However, it is easy to see that $M_z$ is not $C_{\alpha,\frac{\theta}{\alpha}}$--symmetric. Moreover, $C_{\alpha,\frac{\theta}{\alpha}}$  is not a restriction to $\kdt$ of any  $M_z$--conjugation $C$ on $L^2$. On the other hand, the restrictions of $C_{\alpha,\frac{\theta}{\alpha}}$ to the spaces $\kda$ and $\kdt\ominus\kda$ are equal respectively to the restrictions of some (different) $M_z$--conjugations. In the following result we show  that $C_{\alpha,\frac{\theta}{\alpha}}$ and $C_\theta$ are the only conjugations in $\kdt$ with this property.
\begin{theorem}\label{cta}
Let $\alpha$, $\theta$ be nonconstant inner functions such that $\alpha\leqslant\theta$, and let $\tilde C$ be a conjugation on $\kdt$. Assume that there are conjugations $C_i$, $i=1,2$, on $L^2$ with $M_z\,C_i=C_i M_{\bar z}$ such that $\tilde C{|_{\kda}}={C_1}|_{\kda}$ and
 $\tilde C{|_{\kdt\ominus\kda}}={C_2}|_{\kdt\ominus\kda}$. Then $\tilde C=C_\theta$ or $\tilde C=C_{\alpha,\frac{\theta}{\alpha}}=C_\alpha\oplus \alpha C_{\frac{\theta}{\alpha}}\bar\alpha$.
\end{theorem}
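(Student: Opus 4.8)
The plan is to identify each of the two auxiliary $M_z$--conjugations as a conjugation of the form $C_\beta$ via Theorem \ref{bb}, to read off how $\tilde C$ transports the two orthogonal summands $\kda$ and $\kdt\ominus\kda=\alpha\kdta$, and finally to invoke Lemma \ref{xxa} to force exactly two possibilities.

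First I would transfer the mapping properties of $\tilde C$ to $C_1$ and $C_2$. Since $\tilde C$ is a conjugation on $\kdt$ and $\tilde C|_{\kda}=C_1|_{\kda}$, every $f\in\kda$ satisfies $C_1 f=\tilde C f\in\kdt$, so $C_1(\kda)\subset\kdt$; likewise $C_2(\alpha\kdta)\subset\kdt$. Applying Theorem \ref{bb} with $\gamma=1$ to $C_1$ yields an inner $\beta_1$ with $\alpha\leqslant\beta_1\leqslant\theta$ and $C_1=C_{\beta_1}$, and applying it with $\gamma=\alpha$ (with $\tfrac{\theta}{\alpha}$ playing the role of the theorem's ``$\alpha$'') to $C_2$ yields an inner $\beta_2$ with $\theta\leqslant\beta_2\leqslant\alpha\theta$ and $C_2=C_{\beta_2}$. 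The degenerate case $\alpha=\theta$ makes $\kdt\ominus\kda$ trivial and reduces the statement to Theorem \ref{ct}, giving $\tilde C=C_\theta=C_{\alpha,\frac{\theta}{\alpha}}$; so from here on I assume $\tfrac{\theta}{\alpha}$ nonconstant.

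Next I would compute the images of the two summands. From $C_{\beta_1}f=\beta_1\bar z\bar f=\tfrac{\beta_1}{\alpha}C_\alpha f$ for $f\in\kda$ (equivalently, via Proposition \ref{rozklad}) one gets $\tilde C(\kda)=\tfrac{\beta_1}{\alpha}\kda$, while writing $g=\alpha h$ with $h\in\kdta$ and using $C_{\beta_2}g=\tfrac{\beta_2}{\theta}C_{\frac{\theta}{\alpha}}h$ gives $\tilde C(\alpha\kdta)=\tfrac{\beta_2}{\theta}\kdta$. Because $\tilde C$ is a surjective antilinear isometry, the images of two orthogonal summands are again orthogonal and span $\tilde C(\kdt)=\kdt$, whence
\[\tfrac{\beta_1}{\alpha}\,\kda\ \oplus\ \tfrac{\beta_2}{\theta}\,\kdta\ =\ \kdt=K_{\alpha\cdot\frac{\theta}{\alpha}}.\]

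Finally I would apply Lemma \ref{xxa} with $\alpha_1=\alpha$, $\alpha_2=\tfrac{\theta}{\alpha}$, $\gamma_2=\tfrac{\beta_1}{\alpha}\leqslant\tfrac{\theta}{\alpha}$ and $\gamma_1=\tfrac{\beta_2}{\theta}\leqslant\alpha$. The lemma forces either $\tfrac{\beta_2}{\theta}=1$, $\tfrac{\beta_1}{\alpha}=\tfrac{\theta}{\alpha}$ (that is, $\beta_1=\beta_2=\theta$), or $\tfrac{\beta_2}{\theta}=\alpha$, $\tfrac{\beta_1}{\alpha}=1$ (that is, $\beta_1=\alpha$, $\beta_2=\alpha\theta$). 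In the first case $\tilde C$ agrees with $C_\theta$ on both summands, so $\tilde C=C_\theta$; in the second, $C_{\beta_1}|_{\kda}=C_\alpha$ and $C_{\beta_2}(\alpha h)=\alpha C_{\frac{\theta}{\alpha}}h$, which is precisely the action of $C_\alpha\oplus\alpha C_{\frac{\theta}{\alpha}}\bar\alpha=C_{\alpha,\frac{\theta}{\alpha}}$, so $\tilde C=C_{\alpha,\frac{\theta}{\alpha}}$. I expect the main obstacle to be the middle step: pinning down the images as exactly $\tfrac{\beta_1}{\alpha}\kda$ and $\tfrac{\beta_2}{\theta}\kdta$ and verifying they fill all of $\kdt$, since this is what casts the problem into the precise shape demanded by Lemma \ref{xxa}; the remainder is bookkeeping on inner divisors.
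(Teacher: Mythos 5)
Your proof is correct and follows essentially the same route as the paper's: two applications of Theorem \ref{bb} to identify $C_1$ and $C_2$ as $C_{\beta_1}$ and $C_{\beta_2}$ on the respective summands, preservation of orthogonality to get the decomposition $\tfrac{\beta_1}{\alpha}K_\alpha\oplus\tfrac{\beta_2}{\theta}K_{\frac{\theta}{\alpha}}=K_\theta$, and Lemma \ref{xxa} to force the two alternatives. Your explicit treatment of the degenerate case $\alpha=\theta$ (where $\tfrac{\theta}{\alpha}$ is constant and Theorem \ref{bb} would not apply to the second summand) is a small point the paper leaves implicit, but otherwise the arguments coincide.
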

\begin{proof} Note firstly that $C_1(\kda)=\tilde C(\kda)\subset \kdt$.
By Theorem \ref{bb} there is an inner function $\gamma_1$, $1\leqslant \gamma_1\leqslant\frac{\theta}{\alpha}$, such that\[\tilde C{|_{\kda}}=C_1{|_{\kda}}=C_{{\gamma_1}{\alpha}}{}|_{\kda}\colon \kda\to\gamma_1\kda\subset\kdt
.\]Recall that $C_{{\gamma_1}{\alpha}}|_{\kda}
f_{\alpha}=\gamma_1\alpha\bar z\bar f_\alpha=\gamma_1C_\alpha f_\alpha$ for $ f_\alpha\in \kda$, and note that $C_{{\gamma_1}{\alpha}}|_{\kda}$ is a bijection between $\kda$ and $\gamma_1\kda$.
 Similarly,  $C_2(\alpha K_{\frac{\theta}{\alpha}})=  C_2(\kdt\ominus\kda)=\tilde C(\kdt\ominus\kda)\subset \kdt$. Hence there is an inner function $\gamma_2$, $1\leqslant \gamma_2\leqslant{\alpha}$, such that
 \[\tilde C|_{\kdt\ominus\kda}=C_2{|_{\alpha K_{\frac{\theta}{\alpha}}}}=C_{{\gamma_2}{\theta}}|_{\alpha K_{\frac{\theta}{\alpha}}}\colon {\alpha K_{\frac{\theta}{\alpha}}}\to\gamma_2{ K_{\frac{\theta}{\alpha}}}\subset\kdt.\]
 On the other hand, \[C_{{\gamma_2}{\theta}}|_{\alpha K_{\frac{\theta}{\alpha}}}\alpha f_{{{\frac{\theta}{\alpha}}}}=C_{{\gamma_2}{\theta}}
(\alpha f_{{{\frac{\theta}{\alpha}}}})=\gamma_2\theta\bar z\bar \alpha \bar f_{\frac{\theta}{\alpha}}=\gamma_2\tfrac{\theta}{\alpha}\bar z \bar f_{\frac{\theta}{\alpha}}=\gamma_2C_{\frac{\theta}{\alpha}} f_{\frac{\theta}{\alpha}}\] for   $f_{\frac{\theta}{\alpha}}\in K_{\frac{\theta}{\alpha}}$. Note that $C_{{\gamma_2}{\theta}}|_{\alpha K_{\frac{\theta}{\alpha}}}$ is a bijection between $\alpha K_{\frac{\theta}{\alpha}}$ and $\gamma_2 K_{\frac{\theta}{\alpha}}$.
Since involution preserves orthogonality and $\kda\oplus \alpha K_{\frac{\theta}{\alpha}}=\kdt$, we get that
$\gamma_1\kda\oplus \gamma_2 K_{\frac{\theta}{\alpha}}=\kdt$. By Lemma \ref{xxa} there are now only two possibilities: either  $\gamma_1=1, \gamma_2=\alpha$ or  $\gamma_1=\frac{\theta}{\alpha}, \gamma_2=1$. In the second case
$\tilde C{|_{\kda}}=C_{{\theta}}|_{\kda} $ and $\tilde C{|_{\kdt\ominus\kda}=C_\theta|_{\alpha K_{\frac{\theta}{\alpha}}}}$, hence $\tilde C=C_\theta$. In the first case
$\tilde C{|_{\kda}}=C_{{\alpha}}|_{\kda} $ and $\tilde C{|_{\alpha K_{\frac{\theta}{\alpha}}}}=C_{{\alpha}{\theta}}|_{\alpha K_{\frac{\theta}{\alpha}}}$, since for  $f_{\frac{\theta}{\alpha}}\in K_{\frac{\theta}{\alpha}}$ we have
\[C_{{\alpha}{\theta}}|_{\alpha K_{\frac{\theta}{\alpha}}}\alpha f_{\frac{\theta}{\alpha}}=\alpha\theta\bar z\bar\alpha\bar f_{\frac{\theta}{\alpha}}=\alpha C_{\frac{\theta}{\alpha}}f_{\frac{\theta}{\alpha}}
=\alpha C_{\frac{\theta}{\alpha}}\bar \alpha \alpha f_{\frac{\theta}{\alpha}}=\alpha C_{\frac{\theta}{\alpha}}\bar \alpha |_{\alpha K_{\frac{\theta}{\alpha}}} \alpha f_{\frac{\theta}{\alpha}}.
\] Hence
$\tilde C=C_\alpha\oplus \alpha C_{\frac{\theta}{\alpha}}\bar\alpha=C_{\alpha,\frac{\theta}{\alpha}}$.
\end{proof}
\begin{example}
Let $\theta=z^5$ and $\alpha =z^3$.
The only conjugation,
besides  $C_{z^5}$, defined by
$C_{z^5}(z_0,z_1,z_2, z_3, z_4)=(\bar z_4,\bar z_3, \bar z_2,\bar z_1, \bar z_0)$, fulfilling the conditions of Theorem \ref{cta} is the conjugation
$C_{z^3,z^2}$ given by $C_{z^3,z^2}(z_0,z_1,z_2, z_3, z_4)=(\bar z_2,\bar z_1, \bar z_0,\bar z_4, \bar z_3)$.
\end{example}

\begin{example}
Let $\theta(z)=\exp\tfrac{z+1}{z-1}$ and $\alpha(z) =\exp(a\tfrac{z+1}{z-1})$ for \mbox{$0<a<1$.} Then $\tfrac{\theta}{\alpha}(z)=\exp((1-a)\tfrac{z+1}{z-1})$.
The only conjugation,
besides  $C_{\theta}$ (defined by
$C_{\theta}(f)=\theta \bar z\bar f$ for $f\in K_\theta$), fulfilling the conditions of Theorem \ref{cta} is the conjugation
$C_{\alpha,\frac{\theta}{\alpha}}$ given by $C_{\alpha,\frac{\theta}{\alpha}}(f_{\alpha}\oplus \alpha{f_{\frac\theta\alpha}})=\alpha\bar z\bar f_\alpha
+\theta \bar z\bar f_{\frac\theta\alpha}$ for $f_{\alpha}\oplus \alpha{f_{\frac\theta\alpha}}\in K_\theta=K_\alpha\oplus
\alpha K_{\frac{\theta}{\alpha}}$.
\end{example}

\section{C-symmetry of asymmetric truncated Toeplitz operators}
Let $C\colon \mathcal{H}\to\mathcal{H}$ be a conjugation. Note that every conjugation is antilinearly selfadjoint, i.e., $C^\sharp=C$.
The next lemma gives simple but important equivalent conditions for an operator to be $C$--symmetric.
 \begin{lemma}\label{antself} Let $A\in L(\mathcal{H})$. Then the following are equivalent:
 \begin{enumerate}
 \item $A$ is $C$--symmetric;
 \item $AC$ is antilinearly selfadjoint, i.e., $(AC)^\sharp =AC$;
 \item $CA$ is antilinearly selfadjoint, i.e., $(CA)^\sharp =CA$.
 \end{enumerate}
 \end{lemma}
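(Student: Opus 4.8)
The plan is to reduce the whole statement to the antilinear adjoint calculus already established in Proposition \ref{adjoint}, together with the two standing facts that $C^\sharp=C$ (noted just before the lemma) and $C^2=I_{\mathcal H}$. The point is that no analysis is needed here at all: once the $\sharp$'s are computed, the three conditions become three algebraic identities that are manifestly equivalent because $C$ is an involution.

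First I would record that, since $C$ is antilinear and $A$ is linear, both $AC$ and $CA$ are antilinear operators on $\mathcal H$, so their antilinear adjoints are defined. Applying Proposition \ref{adjoint}(4) to the product $AC$ (linear factor $A$, antilinear factor $C$) gives $(AC)^\sharp=C^\sharp A^*=CA^*$, and applying Proposition \ref{adjoint}(3) to the product $CA$ (linear factor $A$, antilinear factor $C$) gives $(CA)^\sharp=A^*C^\sharp=A^*C$, where in both cases I have used $C^\sharp=C$. Consequently condition (2), namely $(AC)^\sharp=AC$, is the identity $CA^*=AC$, and condition (3), namely $(CA)^\sharp=CA$, is the identity $A^*C=CA$.

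It then remains to connect these with $C$-symmetry. Starting from $CAC=A^*$ and multiplying on the left by $C$, using $C^2=I_{\mathcal H}$, yields $AC=CA^*$, which is exactly (2); multiplying $CAC=A^*$ on the right by $C$ instead yields $CA=A^*C$, which is exactly (3). Conversely, multiplying $AC=CA^*$ on the left by $C$ (or $CA=A^*C$ on the right by $C$) recovers $CAC=A^*$. This establishes the cycle $(1)\Leftrightarrow(2)$ and $(1)\Leftrightarrow(3)$, hence the equivalence of all three.

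I do not expect any genuine obstacle: the only thing requiring care is the bookkeeping in the two adjoint computations, i.e.\ making sure the factors land on the correct side in Proposition \ref{adjoint}(3) and (4) and that the antilinear factor contributes $C^\sharp=C$ rather than $C^*$. Beyond that, the equivalences are immediate from $C^2=I_{\mathcal H}$.
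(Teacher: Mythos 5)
Your proof is correct, and it follows exactly the route the paper intends: the paper states this lemma without proof, having just recorded $C^\sharp=C$ and the adjoint rules of Proposition~\ref{adjoint}, and your computation $(AC)^\sharp=CA^*$, $(CA)^\sharp=A^*C$ together with $C^2=I_{\mathcal H}$ is the evident argument being left to the reader. Nothing is missing.
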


It is well known that truncated Toeplitz operators are $C_\theta$--symmetric, \cite{GP}, i.e., for $A^\theta_\varphi\in\mathcal{T}(\theta)$ we have
\begin{equation}\label{11} A^\theta_\varphi C_\theta=C_\theta A^\theta_{\bar\varphi}.\end{equation}
One may wonder whether $A_\varphi^\theta$ is $C_{\alpha,\frac{\theta}{\alpha}}$--symmetric for all $\alpha\leqslant \theta$  but that is not the case in general, as it is shown by this simple example.
\begin{example}
Let 
 $\theta =z^2$ and $\alpha=z$. Then $C=C_{z,z}=J$ is the conjugation given by $C(z_0,z_1)=(\bar z_0, \bar z_1)$, $(z_0,z_1)\in \mathbb{C}^2$. Take a Toeplitz matrix
$A=\left[
   \begin{BMAT}{cc}{cc}
     0 & -1  \\
     1 & \phantom{-}0
   \end{BMAT}
 \right]
$. Then $AC\neq CA^*$.
\end{example}%
Since an asymmetric truncated Toeplitz operator $\ata_\varphi\in\mathcal{T}(\theta,\alpha)$ reduces to the truncated Toeplitz operator $A_\varphi^\theta$, and $C_{\alpha,\frac{\theta}{\alpha}}=C_\theta$ when $\alpha=\theta$, we may ask whether the following generalizations of \eqref{11} hold:
\begin{align}\ata_\varphi C_\theta&=C_\theta\aat_{\bar\varphi}P_\alpha \qquad\quad\text{or}\label{defekt1}\\
\ata_\varphi C_{\alpha,\frac{\theta}{\alpha}}&=C_{\alpha,\frac{\theta}{\alpha}}\aat_{\bar\varphi}P_\alpha.\label{defekt2}
\end{align}

It is easy to see that neither \eqref{defekt1} nor \eqref{defekt2} are true in general. To obtain properties which, in the context of  Lemma \ref{antself}, can be regarded in some sense as describing $C$--symmetric properties of truncated Toeplitz operators,
we will consider the whole space $\kdt$ and use the actions $\diamond$, $\boxplus$.

\begin{theorem}\label{csymmetry}
Let $\alpha$, $\theta$ be nonconstant inner functions such that $\alpha\leqslant \theta$, and let $\varphi\in L^2$ be such that all asymmetric truncated Toeplitz operators below are bounded.
Let us consider the conjugations $C_\theta$ and $C_{\alpha,\frac{\theta}{\alpha}}=C_{\alpha}\oplus \alpha C_{\frac{\theta}{\alpha}} \bar{\alpha}$ in $\kdt=\kda\oplus \alpha \kdta$.
Then the following equalities hold:
\begin{align}&(\ata_\varphi \diamond\alpha A^{\theta,\frac{\theta}{\alpha}}_{\varphi\bar\alpha})C_\theta=C_\theta(\aat_{\bar\varphi}\boxplus A^{\frac{\theta}{\alpha},\theta}_{\bar\varphi\alpha}\bar\alpha), \label{symm1}\\
&(\ata_\varphi \diamond\alpha A^{\theta,\frac{\theta}{\alpha}}_{\varphi\frac{\theta}{\alpha}})C_{\alpha,\frac{\theta}{\alpha}}=C_{\alpha,\frac{\theta}{\alpha}}(\aat_{\bar\varphi}\boxplus A^{\frac{\theta}{\alpha},\theta}_{\overline{\varphi\frac{\theta}{\alpha}}}\bar\alpha),\label{symm2}\\
&(A_{\varphi}^{\theta, \alpha} \oplus \alpha\, A_{\varphi}^{\theta, \frac{\theta}{\alpha}} )( C_{\alpha,\frac{\theta}{\alpha}}\diamond C_\theta)=(C_{\alpha,\frac{\theta}{\alpha}} {\boxplus} C_\theta)(A_{\bar{\varphi}}^{ \alpha,\theta} \oplus  A_{\bar{\varphi}}^{\frac{\theta}{\alpha},\theta } \bar{\alpha}).\label{sym}
\end{align}
    Equivalently, the above operators are antilinearly selfadjoint, i.e.,
\begin{align}&((\ata_\varphi \diamond\alpha A^{\theta,\frac{\theta}{\alpha}}_{\varphi\bar\alpha})C_\theta)^\sharp=
(\ata_\varphi \diamond\alpha A^{\theta,\frac{\theta}{\alpha}}_{\varphi\bar\alpha})C_\theta,\tag{\ref{symm1}a}\label{sym1i}\\
&((\ata_\varphi \diamond\alpha A^{\theta,\frac{\theta}{\alpha}}_{\varphi\frac{\theta}{\alpha}})C_{\alpha,\frac{\theta}{\alpha}})^\sharp=
(\ata_\varphi \diamond\alpha A^{\theta,\frac{\theta}{\alpha}}_{\varphi\frac{\theta}{\alpha}})C_{\alpha,\frac{\theta}{\alpha}},\tag{\ref{symm2}a}\label{sym2i}\\
&((A_{\varphi}^{\theta, \alpha} \oplus \alpha\, A_{\varphi}^{\theta, \frac{\theta}{\alpha}} )( C_{\alpha,\frac{\theta}{\alpha}}\diamond C_\theta))^\sharp =(A_{\varphi}^{\theta, \alpha} \oplus \alpha\, A_{\varphi}^{\theta, \frac{\theta}{\alpha}} )( C_{\alpha,\frac{\theta}{\alpha}}\diamond C_\theta).\tag{\ref{sym}a}\label{sadj}
\end{align}
\end{theorem}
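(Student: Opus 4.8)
The plan is to prove the antilinear self-adjointness statements \eqref{sym1i}, \eqref{sym2i} and \eqref{sadj}, and to obtain \eqref{symm1}, \eqref{symm2} and \eqref{sym} as equivalent reformulations. The bridge for each line is the observation that the antilinear adjoint of the operator on its left-hand side is \emph{exactly} the operator on its right-hand side; this is bookkeeping with Proposition \ref{prop2} (in the linear form noted in the Remark after it), Proposition \ref{adjoint}, and the rule $(\ata_\varphi)^*=\aat_{\bar\varphi}$. For \eqref{symm1}, set $X=\ata_\varphi\diamond\alpha A^{\theta,\frac{\theta}{\alpha}}_{\varphi\bar\alpha}$; then $(XC_\theta)^\sharp=C_\theta X^*$ by Proposition \ref{adjoint}(4), while $X^*=\aat_{\bar\varphi}\boxplus A^{\frac{\theta}{\alpha},\theta}_{\bar\varphi\alpha}\bar\alpha$ by the linear $\diamond/\boxplus$ rule together with $(\alpha A^{\theta,\frac{\theta}{\alpha}}_{\varphi\bar\alpha})^*=A^{\frac{\theta}{\alpha},\theta}_{\bar\varphi\alpha}\bar\alpha$. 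Hence the right-hand side of \eqref{symm1} is $C_\theta X^*$, so \eqref{symm1} reads $XC_\theta=C_\theta X^*$, which by Lemma \ref{antself} is equivalent to \eqref{sym1i} and to $X$ being $C_\theta$-symmetric. The same computation, with $A^{\theta,\frac{\theta}{\alpha}}_{\varphi\frac{\theta}{\alpha}}$ in place of $A^{\theta,\frac{\theta}{\alpha}}_{\varphi\bar\alpha}$ and $C_{\alpha,\frac{\theta}{\alpha}}$ in place of $C_\theta$, handles \eqref{symm2}$\Leftrightarrow$\eqref{sym2i}; and for the third line, with $Z$ the left-hand operator of \eqref{sym}, it gives directly $Z^\sharp=(C_{\alpha,\frac{\theta}{\alpha}}\boxplus C_\theta)(A_{\bar\varphi}^{\alpha,\theta}\oplus A_{\bar\varphi}^{\frac{\theta}{\alpha},\theta}\bar\alpha)$, the right-hand side of \eqref{sym}, so \eqref{sym}$\Leftrightarrow$\eqref{sadj}.

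After this reduction the first line is immediate: using $P_\theta=P_\alpha+\alpha P_{\frac{\theta}{\alpha}}\bar\alpha$ one checks $X=P_\theta M_\varphi|_{\kdt}=A^\theta_\varphi$, and since $A^\theta_\varphi\in\mathcal{T}(\theta)$ is $C_\theta$-symmetric, \eqref{sym1i} follows. For the second line the operator $X_2=\ata_\varphi\diamond\alpha A^{\theta,\frac{\theta}{\alpha}}_{\varphi\frac{\theta}{\alpha}}$ is \emph{not} equal to $A^\theta_\varphi$, so the required $C_{\alpha,\frac{\theta}{\alpha}}$-symmetry cannot be inherited and must be verified directly. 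I would use the criterion that $X_2$ is $C_{\alpha,\frac{\theta}{\alpha}}$-symmetric iff $\langle X_2 f, C_{\alpha,\frac{\theta}{\alpha}}g\rangle=\langle X_2 g, C_{\alpha,\frac{\theta}{\alpha}}f\rangle$ for all $f,g\in\kdt$. Writing $f=f_1+\alpha f_2$, $g=g_1+\alpha g_2$ along $\kdt=\kda\oplus\alpha\kdta$ and splitting $X_2 f=P_\alpha(\varphi f)+\alpha P_{\frac{\theta}{\alpha}}(\varphi\frac{\theta}{\alpha}f)$, the pairing collapses, after moving projections and using $|\alpha|=|\frac{\theta}{\alpha}|=1$ a.e.\ on $\mathbb{T}$, to $\int\varphi f\,z(\bar\alpha g_1+g_2)\,dm$, which upon writing $f=f_1+\alpha f_2$ is symmetric under interchange of $f$ and $g$.

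The third line \eqref{sadj} is proved by the same symmetric-form method applied to $Z$: it is \emph{not} an instance of Proposition \ref{p2p5}, since $A^\theta_\varphi$ need not be $C_{\alpha,\frac{\theta}{\alpha}}$-symmetric (cf.\ the Example), so a direct computation is needed. One finds $\langle Zf,g\rangle=\langle\varphi C_{\alpha,\frac{\theta}{\alpha}}f,g_1\rangle+\langle\varphi C_\theta f,g_2\rangle$, then substitutes $C_{\alpha,\frac{\theta}{\alpha}}f=C_\alpha f_1+\alpha C_{\frac{\theta}{\alpha}}f_2$ and, via Proposition \ref{rozklad}(1), $C_\theta f=C_{\frac{\theta}{\alpha}}f_2+\frac{\theta}{\alpha}C_\alpha f_1$. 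Expanding into four integrals of the form $\int\varphi\bar z\,\bar f_i\bar g_j\cdot(\text{inner})\,dm$ and simplifying the inner-function factors, the sum is seen to be invariant under $f\leftrightarrow g$, i.e.\ $\langle Zf,g\rangle=\langle Zg,f\rangle$, which is $Z^\sharp=Z$.

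The routine adjoint manipulations and the identification $X=A^\theta_\varphi$ are the easy structural part. The main obstacle is the two symmetric-bilinear-form verifications for \eqref{symm2} and \eqref{sym}: they are not deep but require disciplined tracking of the conjugation factors $\alpha\bar z\bar f$, $\frac{\theta}{\alpha}\bar z\bar f$, $\theta\bar z\bar f$ and repeated use of $|\alpha|=|\theta|=1$ on $\mathbb{T}$ in order to recognize the apparently asymmetric expressions as genuinely symmetric in $f$ and $g$.
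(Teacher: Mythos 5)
Your proposal is correct, but it organizes the proof in the reverse order from the paper and replaces its central computation by a different one. The paper proves \eqref{symm1}--\eqref{sym} head-on: it takes $f=f_1\oplus\alpha f_2$ with $f_1\in K^\infty_\alpha$, $f_2\in K^\infty_{\frac{\theta}{\alpha}}$, expands the left- and right-hand sides of each identity separately using $P_\theta=P_\alpha+\alpha P_{\frac{\theta}{\alpha}}\bar\alpha$, $P_\theta=P_{\frac{\theta}{\alpha}}+\frac{\theta}{\alpha}P_\alpha\frac{\bar\theta}{\bar\alpha}$ and Proposition \ref{rozklad}, and matches the two multi-line expansions; the self-adjointness versions \eqref{sym1i}--\eqref{sadj} are then read off afterwards via the adjoint calculus. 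You make the adjoint calculus the first step --- checking via Propositions \ref{adjoint} and \ref{prop2} that the right-hand side of each line is exactly the antilinear adjoint of the left-hand side, so that each identity becomes a self-adjointness claim --- and then verify self-adjointness by exhibiting a manifestly symmetric bilinear form. The ingredients are the same, but your endgame is tidier: for \eqref{symm1} the operator $\ata_\varphi\diamond\alpha A^{\theta,\frac{\theta}{\alpha}}_{\varphi\bar\alpha}$ collapses to $A^\theta_\varphi$ (exactly as in the paper's first display) and the known $C_\theta$-symmetry of truncated Toeplitz operators applies, while for \eqref{symm2} and \eqref{sym} the symmetry under $f\leftrightarrow g$ of $\int\varphi z f(\bar\alpha g_1+g_2)\,dm$, respectively of $\int\varphi\bar z\bigl(\alpha\bar f_1\bar g_1+\theta(\bar f_1\bar g_2+\bar f_2\bar g_1)+\tfrac{\theta}{\alpha}\bar f_2\bar g_2\bigr)\,dm$, is visible at a glance once $|\alpha|=|\theta|=1$ is used, whereas the paper must reconcile two independently computed expressions. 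You are also right that \eqref{sadj} is not an instance of Proposition \ref{p2p5}. Two small points you should make explicit: the integrals only make sense for $f_1,g_1\in K^\infty_\alpha$ and $f_2,g_2\in K^\infty_{\frac{\theta}{\alpha}}$, after which one extends by density exactly as the paper does; and the boundedness of $A^\theta_\varphi$ needed to invoke \cite{GP} in your first step follows from $A^\theta_\varphi=\ata_\varphi+\alpha A^{\theta,\frac{\theta}{\alpha}}_{\varphi\bar\alpha}$ on the common dense domain, both summands being bounded by hypothesis.
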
%
\begin{proof}
Let us take $f_1\in K^\infty_\alpha$, $f_2\in K^\infty_{\frac{\theta}{\alpha}}$ and
 $f=f_1\oplus \alpha f_2$ (recall that $K^\infty_\alpha\oplus\alpha K^\infty_{\frac{\theta}{\alpha}} $ is dense in $\kdt$ -- see \cite[(5.23)]{GMR}).
 To prove  \eqref{symm1} note that 
\begin{multline}\label{aab}
(\ata_\varphi\diamond \alpha A^{\theta,\frac{\theta}{\alpha}}_{\varphi\bar\alpha})C_\theta f=P_\alpha(\varphi C_\theta f)+\alpha P_{\frac{\theta}{\alpha}}\bar\alpha(\varphi C_\theta f)=\\P_\theta(\varphi C_\theta f)=P_\theta C_\theta(\bar\varphi f)=C_\theta P_\theta(\bar\varphi f),\quad
\end{multline}
since $P_\alpha+\alpha P_{\frac{\theta}{\alpha}}\bar\alpha=P_\theta$. On the other hand, 
we obtain
\begin{align*}
C_\theta(\aat_{\bar\varphi}\boxplus A^{\frac{\theta}{\alpha},\theta}_{\bar\varphi\alpha}\bar\alpha)(f_1\oplus \alpha f_2) =C_\theta(P_\theta(\bar\varphi f_1)+P_\theta(\bar\varphi\alpha f_2))
= C_\theta P_\theta(\bar\varphi f).
\end{align*}

Now we will show that
$$C_{\alpha,\frac{\theta}{\alpha}}(\ata_\varphi \diamond\alpha A^{\theta,\frac{\theta}{\alpha}}_{\varphi\frac{\theta}{\alpha}})=(\aat_{\bar\varphi}\boxplus A^{\frac{\theta}{\alpha},\theta}_{\overline{\varphi\frac{\theta}{\alpha}}}\bar\alpha)
C_{\alpha,\frac{\theta}{\alpha}},$$which is equivalent to  \eqref{symm2}.  Note that
\begin{align*}
C_{\alpha,\frac{\theta}{\alpha}}(\ata_\varphi \diamond &\alpha A^{\theta,\frac{\theta}{\alpha}}_{\varphi\frac{\theta}{\alpha}})f=C_{\alpha,\frac{\theta}{\alpha}}P_\alpha (\varphi f)+C_{\alpha,\frac{\theta}{\alpha}}(\alpha P_{\frac{\theta}{\alpha}}(\tfrac{\theta}{\alpha}\varphi f))\\=&C_\alpha P_\alpha(\varphi f)+\alpha C_{\frac{\theta}{\alpha}}P_{\frac{\theta}{\alpha}}(\tfrac{\theta}{\alpha}\varphi f)
=P_\alpha(\bar\varphi C_\alpha f)+\alpha P_{\frac{\theta}{\alpha}}\bar\alpha(\bar\varphi C_\alpha f)\\=& P_\theta (\bar\varphi C_\alpha f).
\end{align*}
On the other hand, 
$C_\alpha f=\bar z(\alpha \bar f_1+\bar f_2)$. Hence by Proposition \ref{rozklad}  we get
\begin{align*}
(\aat_{\bar\varphi}\boxplus &A^{\frac{\theta}{\alpha},\theta}_{\overline{\varphi\frac{\theta}{\alpha}}}\bar\alpha)C_{\alpha,\frac{\theta}{\alpha}}(f_1\oplus\alpha f_2)=
(\aat_{\bar\varphi}\boxplus A^{\frac{\theta}{\alpha},\theta}_{\overline{\varphi\frac{\theta}{\alpha}}}\bar\alpha)(C_{\alpha}f_1\oplus\alpha C_{\frac{\theta}{\alpha}}f_2)\\
=&\  P_\theta(\bar\varphi C_\alpha f_1)+P_\theta (\bar\varphi\tfrac{\bar\theta}{\bar\alpha}C_{\frac{\theta}{\alpha}}f_2)=P_\theta(\bar\varphi(\alpha \bar z\bar f_1+\bar z \bar f_2))=P_\theta(\bar\varphi C_\alpha f).
\end{align*}

To prove  \eqref{sym}, since $P_\theta=P_\alpha+
\alpha P_{\frac{\theta}{\alpha}}\bar\alpha$, note that
 \begin{align*}
(A_{\varphi}^{\theta, \alpha} \oplus \alpha\,& A_{\varphi}^{\theta, \frac{\theta}{\alpha}} )((C_{\alpha}f_1 + \alpha\, C_{\frac{\theta}{\alpha}}f_2) \oplus C_{\theta }f)\\ = &\
P_{\alpha}(\varphi (C_{\alpha}f_1 + \alpha\, C_{\frac{\theta}{\alpha}}f_2)) + \alpha\, P_{\frac{\theta}{\alpha}}(\varphi C_{\theta} f) \\
=&\
 P_{\alpha} (\varphi \alpha\bar{z} \bar{f_1}+ \theta \varphi \bar{z} \bar{f_2}) +\alpha P_{\frac{\theta}{\alpha}} (\varphi \theta\bar{z}\bar{f_1}+ \varphi\theta \bar{z} \bar{\alpha} \bar{f_2}) \\=&\
 P_\alpha(C_\alpha(\bar\varphi f_1))+P_\alpha(C_{\frac{\theta}{\alpha}}(\bar\varphi \bar{\alpha} f_2))  +   \alpha P_{\frac{\theta}{\alpha}}(\bar\alpha C_{\frac{\theta}{\alpha}}(\bar\varphi \bar{\alpha} f_2))\\ &+ \alpha P_{\frac{\theta}{\alpha}}(C_{\frac{\theta}{\alpha}}(\bar\varphi \bar{\alpha}f_1))\\
 =&\  P_\alpha(C_\alpha(\bar\varphi f_1)) + \alpha P_{\frac{\theta}{\alpha}}(C_{\frac{\theta}{\alpha}}(\bar\varphi \bar{\alpha}f_1))+P_\theta(C_{\frac{\theta}{\alpha}}(\bar\varphi \bar{\alpha} f_2)).
\end{align*}
On the other hand,
\begin{align*}
 (C_{\alpha,\frac{\theta}{\alpha}}\boxplus C_\theta)&(A_{\bar{\varphi}}^{ \alpha,\theta} \oplus  A_{\bar{\varphi}}^{\frac{\theta}{\alpha},\theta } \bar{\alpha})(f_1\oplus \alpha f_2)
\\&=
 C_{\alpha}(P_{\alpha} (\bar{\varphi }f_1))+\alpha C_{\frac{\theta}{\alpha}} (P_{\frac{\theta}{\alpha}} (\bar\alpha\bar{\varphi }f_1))+C_\theta (P_\theta(\bar\varphi f_2)).
\end{align*}
Using  $P_\theta=P_{\frac{\theta}{\alpha}}+{\frac{\theta}{\alpha}}
 P_{{\alpha}}{\frac{\bar \theta}{\bar\alpha}}$ we obtain
\begin{align*}
C_\theta (P_\theta(\bar\varphi f_2))&=C_\theta(P_\alpha(\bar\varphi f_2))+\alpha P_{\frac{\theta}{\alpha}}(\bar\alpha\bar\varphi f_2)\\
&=C_{\frac{\theta}{\alpha}}(P_{\frac{\theta}{\alpha}}(\bar\alpha\bar\varphi f_2))+\tfrac{\theta}{\alpha} C_\alpha(P_\alpha(\bar\varphi f_2)) \\
&= P_{\frac{\theta}{\alpha}}(C_{\frac{\theta}{\alpha}}(\bar\alpha\bar\varphi f_2))+\tfrac{\theta}{\alpha}P_\alpha( C_\alpha(\bar\varphi f_2)) \\
&= P_{\frac{\theta}{\alpha}}(C_{\frac{\theta}{\alpha}}(\bar\alpha\bar\varphi f_2))+\tfrac{\theta}{\alpha}P_\alpha\tfrac{\bar\theta}{\bar\alpha}( C_{\frac{\theta}{\alpha}}(\bar\alpha\bar\varphi f_2)) = P_\theta(C_{\frac{\theta}{\alpha}}(\bar\alpha\bar\varphi f_2)).
\end{align*} That completes the proof of \eqref{sym}.
All calculations were made on a dense subset of $\kdt$, hence we get all the equalities in the theorem.
\end{proof}

One can also ask for which symbols $\varphi\in L^2$ the equalities \eqref{defekt1} and \eqref{defekt2} hold.
From Theorem \ref{csymmetry} and \cite[Theorem 4.4]{BCKP} we obtain the following:
\begin{corollary}\label{wn1}
Let $\alpha$, $\theta$ be nonconstant inner functions such that $\alpha\leqslant \theta$, and let $A\in\mathcal{T}(\theta,\alpha)$. Then
\begin{enumerate}
\item $A C_\theta=C_\theta A^*P_\alpha$ if and only if there is $\varphi\in \overline{\frac\theta\alpha K_\alpha}$ such that $A=\ata_\varphi$,
\item $A C_{\alpha,\frac{\theta}{\alpha}}=C_{\alpha,\frac{\theta}{\alpha}}A^*P_\alpha$ if and only if there is $\varphi\in \kda$ such that $A=\ata_\varphi$.
\end{enumerate}
\end{corollary}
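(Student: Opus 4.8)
The plan is to turn each identity \eqref{defekt1} and \eqref{defekt2} into a commutation relation with the projection $P_\alpha$, then to exploit the antilinear self-adjointness recorded in Theorem \ref{csymmetry} to reduce that commutation to a single invariance, which finally reads off as a kernel condition translatable into a symbol class by \cite[Theorem 4.4]{BCKP}. Throughout fix a symbol $\varphi\in L^2$ with $A=\ata_\varphi$, so that $A^*=\aat_{\bar\varphi}$, and recall the decompositions $\kdt=\kda\oplus\alpha\kdta=\kdta\oplus\frac{\theta}{\alpha}\kda$.

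For part (1), consider the antilinear operator $\mathcal A=(\ata_\varphi\diamond\alpha A^{\theta,\frac{\theta}{\alpha}}_{\varphi\bar\alpha})C_\theta$ on $\kdt$. Reading the first coordinate of $\diamond$ gives $P_\alpha\mathcal A=\ata_\varphi C_\theta$, while \eqref{symm1} together with the observation that $(\aat_{\bar\varphi}\boxplus A^{\frac{\theta}{\alpha},\theta}_{\bar\varphi\alpha}\bar\alpha)P_\alpha=\aat_{\bar\varphi}P_\alpha$ gives $\mathcal AP_\alpha=C_\theta\aat_{\bar\varphi}P_\alpha$. Hence \eqref{defekt1} is precisely the assertion $P_\alpha\mathcal A=\mathcal AP_\alpha$, i.e. that $\mathcal A$ commutes with $P_\alpha$, equivalently that $\mathcal A$ leaves both $\kda$ and $\alpha\kdta$ invariant. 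By \eqref{sym1i} the operator $\mathcal A$ is antilinearly self-adjoint, and for such an operator invariance of one summand forces invariance of the orthogonal complement; it therefore suffices to test invariance of $\alpha\kdta$. Since $C_\theta(\alpha\kdta)=\kdta$ (Proposition \ref{rozklad}) and the second coordinate $\alpha A^{\theta,\frac{\theta}{\alpha}}_{\varphi\bar\alpha}h$ always lies in $\alpha\kdta$, the image $(\ata_\varphi h)\oplus(\alpha A^{\theta,\frac{\theta}{\alpha}}_{\varphi\bar\alpha}h)$ of $h\in\kdta$ lies in $\alpha\kdta$ exactly when $\ata_\varphi h=0$. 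Thus \eqref{defekt1} holds if and only if $\kdta\subseteq\ker A$.

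Part (2) is entirely parallel, now with $\mathcal B=(\ata_\varphi\diamond\alpha A^{\theta,\frac{\theta}{\alpha}}_{\varphi\frac{\theta}{\alpha}})C_{\alpha,\frac{\theta}{\alpha}}$, which is antilinearly self-adjoint by \eqref{sym2i}. Exactly as above one checks $P_\alpha\mathcal B=\ata_\varphi C_{\alpha,\frac{\theta}{\alpha}}$ and $\mathcal BP_\alpha=C_{\alpha,\frac{\theta}{\alpha}}\aat_{\bar\varphi}P_\alpha$, so that \eqref{defekt2} is the statement that $\mathcal B$ commutes with $P_\alpha$. Testing invariance of $\alpha\kdta$ and using $C_{\alpha,\frac{\theta}{\alpha}}(\alpha\kdta)=\alpha\kdta$, the same coordinate computation shows \eqref{defekt2} holds if and only if $\ata_\varphi$ annihilates $\alpha\kdta$, i.e. $\alpha\kdta\subseteq\ker A$.

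It remains to match the two kernel conditions with symbol classes. Since $\kdta=\kdt\ominus\frac{\theta}{\alpha}\kda$ and $\alpha\kdta=\kdt\ominus\kda$, the conditions $\kdta\subseteq\ker A$ and $\alpha\kdta\subseteq\ker A$ are equivalent, by orthogonal duality, to $\operatorname{ran}A^*\subseteq\frac{\theta}{\alpha}\kda$ and $\operatorname{ran}A^*\subseteq\kda$ respectively; by \cite[Theorem 4.4]{BCKP} these are exactly the operators in $\mathcal{T}(\theta,\alpha)$ admitting a symbol in $\overline{\frac{\theta}{\alpha}\kda}$, respectively in $\kda$. This yields (1) and (2). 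I expect the one subtle point to be the passage from the commutation relation to a single invariance: it rests on the antilinear self-adjointness furnished by \eqref{sym1i} and \eqref{sym2i}, without which one would have to verify both invariances separately and argue their equivalence directly from the $C_\theta$-symmetry of the full truncated Toeplitz operator $A^\theta_{\bar\varphi}$.
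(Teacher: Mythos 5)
Your proof is correct, but it runs along a genuinely different track from the paper's. The paper obtains both equivalences by setting the companion operator to zero directly in \eqref{symm1} and \eqref{symm2} (i.e.\ $A^{\theta,\frac{\theta}{\alpha}}_{\varphi\bar\alpha}=0$, resp.\ $A^{\theta,\frac{\theta}{\alpha}}_{\varphi\frac{\theta}{\alpha}}=0$) and then translating these vanishing conditions into symbol classes via \cite[Theorem 4.4]{BCKP}; it does not use the antilinear self-adjointness statements \eqref{sym1i}, \eqref{sym2i} at all. You instead recast \eqref{defekt1} and \eqref{defekt2} as the commutation $P_\alpha\mathcal{A}=\mathcal{A}P_\alpha$ of an antilinearly selfadjoint operator with $P_\alpha$, use selfadjointness to reduce the two invariances to one, and land on the operator-theoretic conditions $\kdta\subseteq\ker A$ and $\alpha\kdta\subseteq\ker A$. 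This buys you a cleaner ``only if'' direction: the paper's phrase ``we have to assume that $A^{\theta,\frac{\theta}{\alpha}}_{\varphi\bar\alpha}=0$'' glosses over why \eqref{defekt1} forces the second component to vanish, whereas your reducing-subspace argument is a genuine chain of equivalences; it also produces intrinsic characterizations of the two cases that match the paper's closing remark identifying $\ata_\varphi$ with $A^\theta_\varphi$ and $A^\alpha_\varphi$. The price is that your final step is compressed: \cite[Theorem 4.4]{BCKP} characterizes when an asymmetric truncated Toeplitz operator vanishes, so to finish you still must convert $\kdta\subseteq\ker A$ into $A^{\frac{\theta}{\alpha},\alpha}_\varphi=0$, read off $\varphi\in\alpha H^2+\overline{\frac{\theta}{\alpha}H^2}$ (resp.\ $\varphi\in H^2+\overline{\theta H^2}$ for (2)), and reduce modulo $\alpha H^2+\overline{\theta H^2}$ to reach the normalized classes $\overline{\frac{\theta}{\alpha}\kda}$ and $\kda$ --- exactly the modular computation the paper carries out; your intermediate symbol conditions differ from the paper's ($\alpha H^2$ versus $\theta H^2$ in the analytic part) but agree modulo the symbol kernel, so the conclusion is the same. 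Filling in that last translation explicitly would make the argument complete.
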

\begin{proof}
Note that to obtain the desired equality (1) we have to assume that $A^{\theta,\frac{\theta}{\alpha}}_{\varphi\bar\alpha}=0$ in the formula \eqref{symm1} of Theorem \ref{csymmetry}, which is equivalent by \cite[Theorem 4.4]{BCKP} to $\varphi\bar\alpha\in \frac{\theta}{\alpha}H^2+\overline{\theta H^2}$, i.e., $\varphi\in \theta H^2+\overline{\frac{\theta}{\alpha}H^2}$. Since for $\varphi \in \alpha H^2+\overline{\theta H^2}$ the operator $\ata_\varphi=0$, we may assume that $\varphi\in \overline{\kdt\cap\tfrac{\theta}{\alpha}H^2}=\overline{\frac\theta\alpha K_\alpha}$.

Similarly, the assumption  $A^{\theta,\frac{\theta}{\alpha}}_{\varphi\frac{\theta}{\alpha}}=0$ is equivalent to $\varphi\frac{\theta}{\alpha}\in \frac{\theta}{\alpha}H^2+\overline{\theta H^2}$. Since for $\varphi \in \alpha H^2+\overline{\theta H^2}$ the operator $\ata_\varphi=0$, it is enough to consider $\varphi\in \kda$ for the equality (2).
\end{proof}

Note that if  $\varphi\in \overline{\frac\theta\alpha K_\alpha}$, then $\ata_\varphi f=P_\alpha\varphi P_\theta f = P_\theta\varphi P_\theta f$ for all $f\in \kdt$, while if $\varphi\in K_\alpha$, then $\ata_\varphi f=P_\alpha\varphi P_\theta f = P_\alpha\varphi P_\alpha f$ for all $f\in \kdt$. Therefore the conditions in (1) and (2) of the previous corollary are satisfied if and only if $\ata_\varphi$ can be identified with  truncated Toeplitz operators  $A^\theta_\varphi$ and $A^\alpha_\varphi$, respectively.

\section{Example with $\theta=z^N$.}
To illustrate the equalities in Theorem \ref{csymmetry} we consider the simplest inner function $\theta=z^N$.
 Then $K_{z^N}$ is the space of polynomials of degree smaller than $N$. Hence $K_{z^N}$ can be identified with $\mathbb{C}^N$. Then the conjugation $C_{z^N}$ in $\mathbb{C}^N$ is given by $C_{z^N}(z_0,\dots, z_N)=(\bar z_N,\dots,\bar z_0)$. Let us
firstly illustrate Lemma \ref{antself}.
   \begin{remark}\label{rem2}
 Let $A\in L(\mathbb{C}^N)$ be a truncated Toeplitz operator with matrix
 $A=(a_{ij})_{i,j=0}^{N-1}$, $a_{ij}=t_{i-j}$ for $i,j=0,\dots,N$. Recall that $A$ is $C_{z^N}$--symmetric, i.e., the matrix is symmetric according to the second diagonal (see \cite{GP}). On the other hand,
 by \eqref{as}, an antilinear  operator $X$ given by a matrix
 $(s_{ij})_{i,j=0}^{N-1}$ is antilinearly selfadjoint if its matrix is symmetric, i.e., $s_{ij}=s_{ji}$ for $i,j=0,\dots,N$. Note that the antilinear operator $AC_{z^N}$ has the Hankel matrix $(b_{ij})_{i,j=0,\dots,N}$, with $b_{i,j}=t_{i+j-N+1}$, which is clearly symmetric ($b_{ij}=b_{ji}$ for $i,j=0,\dots,N$).
 \end{remark}
 Now we will illustrate the equations  \eqref{sym1i}, \eqref{sym2i}, \eqref{sadj}.
\begin{example}
Let $\alpha =z^3$ and $\theta=z^5$. Then any operator in $\mathcal{T}(z^5,z^3)$ has a symbol $\varphi=\sum\limits_{n=-4}^2 a_k z^k\in K_{z^3}+\overline{K_{z^5}}$ (see \cite[Corollary 4.5]{BCKP}). Thus it
has  a matrix representation
$A_\varphi^{z^5,z^3}=\left[   \begin{BMAT}{ccccc}{ccc}
     a_0 & a_{-1} & a_{-2} & a_{-3}& a_{-4}\\
     a_1 & a_0 & a_{-1} & a_{-2}& a_{-3}\\
     a_2 & a_1 & a_0 & a_{-1} & a_{-2}\\
   \end{BMAT} \right]$. To illustrate the  equality \eqref{sym1i} in Theorem \ref{csymmetry} note that $A_{\bar \alpha\varphi}^{z^5,z^2}=\left[   \begin{BMAT}{ccccc}{cc}
     0 & a_2 & a_1 &a_0 & a_{-1}\\ 0 & 0 & a_2& a_1 & a_0\\
       \end{BMAT} \right]$, so $A^{z^5,z^3}_\varphi \diamond z^3 A^{z^5,z^2}_{\varphi\bar z^3}$  is simply the Toeplitz matrix in $\mathbb{C}^5$ with the symbol
       $\varphi=\sum\limits_{n=-4}^2 a_k z^k\in K_{z^3}+\overline{K_{z^5}}\subsetneqq K_{z^5}+\overline{K_{z^5}}$, and its $C_{z^5}$--symmetry or the symmetry of the Hankel matrix $(A^{z^5,z^3}_\varphi \diamond z^3 A^{z^5,z^2}_{\varphi\bar z^3})\,C_{z^5}$ is easily satisfied. Now to obtain equality (1) in Corollary \ref{wn1} in our case we have to assume that $\varphi=a_{-4}\bar z^4+a_{-3}\bar z^3+a_{-2}\bar z^2$, so $a_{-1}=a_0=a_1=a_2=0$.

To illustrate  \eqref{sym2i}, besides the involution $C_{z^5}$, we consider  another involution
$C_{z^3,z^2}(z_0,z_1,z_2, z_3, z_4)=(\bar z_2,\bar z_1, \bar z_0,\bar z_4, \bar z_3)$. Note that $A^{z^5,z^2}_{\varphi z^2}=\left[   \begin{BMAT}{ccccc}{cc}
     a_{-2} & a_{-3} & a_{-4} & 0 & 0\\ a_{-1} & a_{-2} & a_{-3}& a_{-4} & 0\\
       \end{BMAT}\right]$.
Hence
\begin{multline}\label{r11}(A^{z^5,z^3}_\varphi \diamond\alpha A^{z^5,z^2}_{\varphi z^2})C_{z^3,z^2}(z_0,z_1,z_2, z_3, z_4)\\=
\left[   \begin{array}{ccc;{2pt/2pt}cc}
    a_{-2} & a_{-1} & a_{0} & a_{-4} & a_{-3} \\
    a_{-1} & a_0 & a_1 & a_{-3} & a_{-2} \\ a_0 & a_1 & a_2& a_{-2} & a_{-1}\\
     \hdashline[2pt/2pt]
    a_{-4} & a_{-3}& a_{-2} & 0 & 0\\ a_{-3} & a_{-2} & a_{-1} & 0 &a_{-4}\\
   \end{array} \right] \left[ \! \begin{array}{c} \bar z_0 \\ \bar z_1\\ \bar z_2\\ \bar z_3\\ \bar z_4\end{array} \! \right].\end{multline}
Note that to obtain the equality (2) in Corollary \ref{wn1} we have to take $\varphi=a_0+a_1 z+a_2z^2.$

In the equality \eqref{sadj} $A_\varphi^{z^5,z^2}=\left[\! \begin{array}{ccccc}
     a_0 & a_{-1} & a_{-2} &a_{-3} & a_{-4}\\ a_1 & a_0 & a_{-1}& a_{-2} & a_{-3}\\
       \end{array}\! \right]$. Hence
\begin{multline}\label{prz1}
(A_\varphi^{z^5,z^3}\oplus A_\varphi^{z^5,z^2})(C_{z^3,z^2}\diamond C_{z^5})(z_0,z_1,z_2, z_3, z_4)\\=
\left[   \begin{array}{ccc;{2pt/2pt}cc}
    a_{-2} & a_{-1} & a_{0} & a_{-4} & a_{-3} \\
    a_{-1} & a_0 & a_1 & a_{-3} & a_{-2} \\ a_0 & a_1 & a_2& a_{-2} & a_{-1}\\ \hdashline[2pt/2pt] a_{-4} & a_{-3}& a_{-2} & a_{-1} & a_0\\ a_{-3} & a_{-2} & a_{-1} & a_0 &a_1\\
   \end{array} \right] \left[\!\begin{array}{c} \bar z_0 \\ \bar z_1\\ \bar z_2\\ \bar z_3\\ \bar z_4\end{array} \! \right].
\end{multline}
 The equations \eqref{sym2i} and \eqref{sadj} imply  that the antilinear operators $(A^{z^5,z^3}_\varphi \diamond\alpha A^{z^5,z^2}_{\varphi z^2})C_{z^3,z^2}$ and
$(A_\varphi^{z^5,z^3}\oplus A_\varphi^{z^5,z^2})(C_{z^3,z^2}\diamond C_{z^5})$ are antilinearly selfadjoint.
If we write, in both cases,  the above matrices by blocks $\left[\begin{array}{c;{2pt/2pt}c}H_{11}& H_{12}\\  \hdashline[2pt/2pt]H_{21}& H_{22} \end{array}\right]$, then each block is a Hankel matrix and the whole matrix is symmetric, moreover, $H_{12}$ is symmetric to $H_{21}$. In the first case some part of $H_{22}$ annihilates. The above should be also seen in the context of Remark \ref{rem2}.
\end{example}

\newpage
\section{Connections with Hankel operators}
 In light of Theorem \ref{csymmetry} it is natural to ask about the differences $\ata_\varphi C_\theta-C_\theta\aat_{\bar\varphi}P_\alpha$ and
$\ata_\varphi C_{\alpha,\frac{\theta}{\alpha}}-C_{\alpha,\frac{\theta}{\alpha}}\aat_{\bar\varphi}P_\alpha$, which have to become zero when $\alpha=\theta$. It turns out that these differences can be expressed in terms of certain Hankel operators.

Let $P$ denote the orthogonal projection from $L^2$ onto $H^2$, and $P^-$ denote the orthogonal projection from $L^2$ onto $\overline{H^2_0}=L^2\ominus H^2$.
For $\varphi\in L^2$ we define: \[H_{\varphi} \colon H^2 \to \overline{H^2_0}, \quad H_{\varphi} f=P^-(\varphi f);\] for $f\in H^2$ such that $\varphi f\in L^2$. Similarly, for $\theta\in L^\infty$,
\[\widetilde{H}_{\theta} \colon \overline{H^2_0} \to  H^2 , \quad \widetilde{H}_{\theta} f=P(\theta f)\text{ for } f\in \overline{H^2_0}.\]

Let $\theta$ be a nonconstant inner function. Recall firstly the following:
\begin{proposition}\label{proj}
 Let $\theta$ be a nonconstant inner function and let $\kdt=H^2\ominus \theta H^2$ be the associated model space.  Then
\begin{enumerate}
\item $P_{\theta}=\theta P^-\, \bar{\theta}P=\theta P^-\, \bar{\theta}-P^-$,
\item $P_{\theta}f=\theta P^-\, \bar{\theta}f=f-\theta P\bar\theta f$ for all $f \in H^2$,
\item $P_\theta \bar f=P_\theta P \bar f=\overline{f(0)}P_\theta1=\overline{f(0)}(1-\overline{\theta(0)}\theta)$ for all $f \in H^2$.
\end{enumerate}
\end{proposition}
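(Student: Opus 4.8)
The plan is to prove everything from part (1), the operator identity $P_\theta=\theta P^-\bar\theta P$, and then obtain (2) and (3) by elementary manipulation. I would first verify (1) by checking that the two linear operators $P_\theta$ and $\theta P^-\bar\theta P$ agree on each summand of the orthogonal decomposition $L^2=H^2\oplus\overline{H^2_0}$. The structural fact doing the work is the characterization $K_\theta=\{k\in H^2:\bar\theta k\in\overline{H^2_0}\}$, which follows because $k\perp\theta H^2$ exactly when $\langle\bar\theta k,h\rangle=\langle k,\theta h\rangle=0$ for all $h\in H^2$. For $f\in H^2$ write $f=k+\theta g$ with $k\in K_\theta$, $g\in H^2$; then $\bar\theta f=\bar\theta k+g$, and since $\bar\theta k\in\overline{H^2_0}$ while $g\in H^2$ we get $P^-\bar\theta f=\bar\theta k$, whence $\theta P^-\bar\theta P f=\theta\bar\theta k=k=P_\theta f$. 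For $f\in\overline{H^2_0}$ both $Pf=0$ and $P_\theta f=0$ (as $K_\theta\subset H^2$), so both sides vanish. This establishes $P_\theta=\theta P^-\bar\theta P$ on all of $L^2$.

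For the second equality in (1) I would substitute $P=I-P^-$ to get $\theta P^-\bar\theta P=\theta P^-\bar\theta-\theta P^-\bar\theta P^-$, so it remains to show $\theta P^-\bar\theta P^-=P^-$. This reduces to proving $\theta P^-\bar\theta g=g$ for every $g\in\overline{H^2_0}$. The point is that $\overline{H^2_0}$ is invariant under multiplication by $\bar\theta$: for $g\in\overline{H^2_0}$ and $h\in H^2$ one has $\langle\bar\theta g,h\rangle=\langle g,\theta h\rangle=0$, so $\bar\theta g\in\overline{H^2_0}$, giving $P^-\bar\theta g=\bar\theta g$ and then $\theta P^-\bar\theta g=\theta\bar\theta g=g$ using $|\theta|=1$ a.e.

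Part (2) is then immediate: for $f\in H^2$ we have $Pf=f$, so (1) gives $P_\theta f=\theta P^-\bar\theta f$, and writing $P^-=I-P$ together with $\theta\bar\theta=1$ a.e. yields $\theta P^-\bar\theta f=f-\theta P\bar\theta f$. For part (3), take $f\in H^2$ and split $\bar f=P\bar f+P^-\bar f$. Since $P^-\bar f\in\overline{H^2_0}$ is orthogonal to $K_\theta$, we have $P_\theta P^-\bar f=0$, hence $P_\theta\bar f=P_\theta P\bar f$. A Fourier-coefficient computation shows $P\bar f=\overline{f(0)}$ (the only nonnegative-frequency term of $\bar f$), so $P_\theta\bar f=\overline{f(0)}\,P_\theta 1$. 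Finally I would compute $P_\theta 1$ from (2): $P_\theta 1=1-\theta P\bar\theta$, and since $P\bar\theta=\overline{\theta(0)}$ (again reading off the constant term), this equals $1-\overline{\theta(0)}\theta$, completing (3).

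There is no genuine obstacle here; the entire proposition rests on the single observation that multiplication by $\bar\theta$ carries both $K_\theta$ and $\overline{H^2_0}$ into $\overline{H^2_0}$, together with $L^2=H^2\oplus\overline{H^2_0}$. The only mild care needed is to keep the roles of $P$ and $P^-$ straight when converting between the three displayed forms; everything else is a direct consequence of $\theta\bar\theta=1$ a.e.\ and the reading-off of constant Fourier terms.
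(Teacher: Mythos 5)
Your proof is correct. The paper does not actually prove Proposition \ref{proj} --- it is stated as a recalled standard fact with no argument given --- so there is nothing to compare against; your verification, resting on the decompositions $L^2=H^2\oplus\overline{H^2_0}$ and $H^2=\kdt\oplus\theta H^2$ together with $|\theta|=1$ a.e., is the standard complete argument, and each of the three parts checks out (including the constant-term computations $P\bar f=\overline{f(0)}$ and $P\bar\theta=\overline{\theta(0)}$ in part (3)).
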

Using Proposition \ref{proj} it it easy to see that, for $A^\theta_\varphi \in \mathcal{T}(\theta)$, both $A^\theta_\varphi C_\theta$ and $C_\theta A^\theta_\varphi$ can be expressed in terms of Hankel operators. In fact we have
\[
A^\theta_\varphi C_\theta =\widetilde{H}_\theta H_{\bar\theta\varphi} C_\theta\;\;\text{and}\;\;C_\theta A^\theta_\varphi =\widetilde{H}_\theta H_{\bar\theta\bar\varphi} C_\theta\,,
\]
which is another way to see that $A^\theta_\varphi C_\theta = C_\theta A^\theta_{\bar\varphi }$, i.e., $ A^\theta_\varphi $ is $C_\theta$-symmetric.

In the asymmetric case ($\alpha <\theta$) we no longer have, in general, either
\begin{align}\ata_\varphi C_\theta&=C_\theta\aat_{\bar\varphi} \qquad\quad\text{or}\label{H1}\\
\ata_\varphi C_{\alpha,\frac{\theta}{\alpha}}&=C_{\alpha,\frac{\theta}{\alpha}}\aat_{\bar\varphi},\label{H2}
\end{align}
where, for simplicity, we identify $ \ata_\varphi$ and $\aat_{\bar\varphi}$ with the operators $P_\alpha\varphi P_\theta$ and $P_\theta{\bar\varphi} P_\alpha$, respectively. Thus it is natural to ask about the differences between the operators on the left and on the right hand sides of the equalities \eqref{H1} and \eqref{H2}. In the following theorem we characterize those differences in terms of Hankel operators. This will later provide, in particular, another way to prove \eqref{sym}.

\begin{theorem}\label{h1} Let $\alpha$, $\theta$ be nonconstant inner functions and $\alpha\leqslant \theta$. If $A_{\varphi}^{\theta, \alpha}\in\mathcal{T}(\theta,\alpha)$ for $\varphi\in L^2$, then the following equalities hold:
\begin{align}
&(\ata_\varphi C_\theta-C_\theta\aat_{\bar\varphi}P_\alpha)f=
    (\widetilde{H}_{\alpha}H_{\bar\alpha\varphi}C_{\frac{\theta}{\alpha}}P_{\frac{\theta}{\alpha}}\bar\alpha-\alpha\widetilde{H}_{\frac{\theta}{\alpha}}H_{\bar\theta\varphi}C_\theta P_\alpha)f;\label{han1}\\\label{han2}
&(A_{\varphi}^{\theta, \alpha}C_{\alpha,\frac{\theta}{\alpha}}-C_{\alpha,\frac{\theta}{ \alpha}}A_{\overline{\varphi}}^{ \alpha,\theta}P_\alpha)f=(\widetilde{H}_{\alpha} H_{\varphi}C_{\theta}-\widetilde{H}_{\theta} H_{\varphi}C_{\alpha}P_\alpha)f;\\
    &(\tfrac{\theta}{\alpha}\ata_\varphi C_\theta-C_\theta\aat_{\bar\varphi}P_\alpha\tfrac{\bar\theta}{\bar\alpha})f=
    (\widetilde{H}_{\theta}H_{\varphi}C_{\frac{\theta}{\alpha}}P_{\frac{\theta}{\alpha}}-
    \widetilde{H}_{\frac{\theta}{\alpha}}H_{\varphi}C_\theta)f\label{han3}
\end{align}
 for $f\in \kdt$.
\end{theorem}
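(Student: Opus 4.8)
The plan is to reduce everything to the factorization of truncated Toeplitz operators through Hankel operators, exactly as in the symmetric case recorded just before the theorem, namely $A^\theta_\varphi C_\theta=\widetilde H_\theta H_{\bar\theta\varphi}C_\theta$. The key structural fact I would lean on is Proposition~\ref{proj}(1)--(2), which expresses $P_\alpha$, $P_\theta$ and $P_{\frac\theta\alpha}$ in terms of $P$, $P^-$ and multiplication by the inner functions; the point is that each projection $P_\beta$ applied to a product $\varphi f$ with $f\in K_\beta^\infty$ factors as $\beta P^-\bar\beta(\varphi f)=\beta H_{\bar\beta\varphi}f$ up to an analytic-projection term, and that the outer multiplication combined with $P$ is precisely $\widetilde H_\beta$. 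So the strategy is: expand both $\ata_\varphi C_\theta$ and $C_\theta\aat_{\bar\varphi}P_\alpha$ using the decompositions $P_\theta=P_\alpha+\alpha P_{\frac\theta\alpha}\bar\alpha$ and $P_\theta=P_{\frac\theta\alpha}+\frac\theta\alpha P_\alpha\frac{\bar\theta}{\bar\alpha}$, rewrite each piece as a composition $\widetilde H_\cdot H_\cdot C_\cdot$, and match terms.

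Concretely, I would work on the dense set $f\in\kda^\infty\oplus\alpha K_{\frac\theta\alpha}^\infty$ and test against $C_\theta$, $C_\alpha$, $C_{\frac\theta\alpha}$ using the reproducing/commutation identities $C_\theta=\theta\bar z J$, $M_zC_\theta=C_\theta M_{\bar z}$ and the interplay $C_\theta=\frac\theta\alpha$-twisted version of $C_\alpha$ and $C_{\frac\theta\alpha}$ encoded in Proposition~\ref{rozklad}. For \eqref{han1}, I would write $\ata_\varphi C_\theta f=P_\alpha(\varphi C_\theta f)$ and $C_\theta\aat_{\bar\varphi}P_\alpha f=C_\theta P_\theta(\bar\varphi P_\alpha f)$, then replace $P_\alpha=\alpha P^-\bar\alpha P$ and $P_\theta=\theta P^-\bar\theta P$ from Proposition~\ref{proj}(1), pull the conjugation through using $C_\theta(\bar\varphi g)=\overline{\theta}z\varphi\,\overline{C_\theta\!-\!\text{data}}$-type relations, and recognize $\alpha P^-(\bar\alpha\varphi\,\cdot)$ as $\alpha H_{\bar\alpha\varphi}$ and the final projection as $\widetilde H_\alpha$. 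The Hankel-operator domains $H^2\to\overline{H^2_0}$ and $\widetilde H\colon\overline{H^2_0}\to H^2$ must line up, which is what forces the appearance of $C_{\frac\theta\alpha}P_{\frac\theta\alpha}\bar\alpha$ and $C_\theta P_\alpha$ as the correct "input conjugations" on the right-hand side. Equations \eqref{han2} and \eqref{han3} are handled by the same bookkeeping, using the second decomposition of $P_\theta$ for \eqref{han3} and the direct definition of $C_{\alpha,\frac\theta\alpha}=C_\alpha\oplus\alpha C_{\frac\theta\alpha}\bar\alpha$ for \eqref{han2}.

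The main obstacle I anticipate is not any single identity but the careful tracking of which Hankel operator, which conjugation, and which projection sits where: the antilinearity of $C$ flips $\varphi\mapsto\bar\varphi$ and $z\mapsto\bar z$, so signs and the roles of $H_\psi$ versus $\widetilde H_\psi$ swap depending on the order of application. In particular, the cross terms coming from $P_\theta=P_\alpha+\alpha P_{\frac\theta\alpha}\bar\alpha$ on the left and the conjugation acting on the $\alpha K_{\frac\theta\alpha}$ component on the right must be shown to either cancel or recombine into exactly the two Hankel terms displayed; verifying this cancellation is the delicate computational heart of the proof. I would organize it by splitting each side into its $\kda$ and $\alpha K_{\frac\theta\alpha}$ contributions and comparing them componentwise, so that the equality reduces to two scalar Hankel identities per equation rather than one opaque operator identity.

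Once the calculations are done on the dense subset $\kda^\infty\oplus\alpha K_{\frac\theta\alpha}^\infty$, boundedness of all operators involved (guaranteed by the hypothesis $\ata_\varphi\in\mathcal{T}(\theta,\alpha)$ together with the corresponding boundedness of the auxiliary truncated Toeplitz and Hankel operators) extends each identity to all of $\kdt$ by continuity, exactly as at the end of the proof of Theorem~\ref{csymmetry}.
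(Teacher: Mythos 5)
Your plan coincides with the paper's own proof: the same dense subset $K_\alpha^\infty\oplus\alpha K_{\frac{\theta}{\alpha}}^\infty$, the same two orthogonal decompositions $P_\theta=P_\alpha+\alpha P_{\frac{\theta}{\alpha}}\bar\alpha$ and $P_\theta=P_{\frac{\theta}{\alpha}}+\frac{\theta}{\alpha}P_\alpha\frac{\bar\theta}{\bar\alpha}$, Proposition~\ref{rozklad} to split the action of the conjugations on the two components, Proposition~\ref{proj} to convert each projected product into compositions of the form $\widetilde{H}_\beta H_{\bar\beta\varphi}$, and a final extension by density. What remains is only the execution of the componentwise cancellations that you correctly single out as the computational core; carried out as described, they yield exactly the displayed right-hand sides.
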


\begin{proof}   As in the proof of Theorem \ref{csymmetry}, it is enough to consider $f=f_{\alpha}+\alpha f_{\frac{\theta}{\alpha}}$, $f_{\alpha}\in K^\infty_{\alpha}$, $f_{\frac{\theta}{\alpha}}\in K^\infty_{\frac{\theta}{\alpha}}$.
To prove \eqref{han1} 
note that by Proposition \ref{rozklad} and by Proposition \ref{proj}, we have
 \begin{multline*}
 \ata_\varphi C_\theta f=\ata_\varphi(C_{\frac{\theta}{\alpha}}f_{\frac{\theta}{\alpha}}+\tfrac{\theta}{\alpha}C_\alpha f_\alpha)=P_\alpha(\varphi C_{\frac{\theta}{\alpha}}f_{\frac{\theta}{\alpha}})+P_\alpha(\varphi\tfrac{\theta}{\alpha}\alpha\bar z\bar f_\alpha)\\
 =P(\alpha P^-(\bar\alpha\varphi C_{\frac{\theta}{\alpha}}f_{\frac{\theta}{\alpha}}))+P_\alpha(\theta\varphi\bar z\bar f_\alpha)=\widetilde H_\alpha H_{\bar\alpha\varphi}C_{\frac{\theta}{\alpha}}f_{\frac{\theta}{\alpha}}+P_\alpha(\theta\varphi\bar z\bar f_\alpha)
 \end{multline*}
 and 
\begin{align*}
C_\theta\aat_{\bar\varphi}P_\alpha f&=C_\theta P_\theta(\bar\varphi f_\alpha)=P_\theta(\theta\varphi\bar z\bar f_\alpha)
\\&=P_\alpha(\theta\varphi \bar z\bar f_\alpha)+\alpha P_{\frac{\theta}{\alpha}}(\bar\alpha\varphi C_\theta f_\alpha)\\&=P_\alpha(\theta\varphi \bar z\bar f_\alpha)+\alpha P(\tfrac{\theta}{\alpha}P^-(\bar\theta\varphi C_\theta f_\alpha))\\&=P_\alpha(\theta\varphi \bar z\bar f_\alpha)+\alpha \widetilde H_{\frac{\theta}{\alpha}}H_{\bar\theta\varphi}C_\theta f_\alpha.
\end{align*}
 To prove \eqref{han2} note firstly that
$A_{\varphi}^{\theta, \alpha} =A_{\varphi}^{\alpha}P_\alpha+P_{\alpha}(\varphi{\alpha}P_{\frac{\theta}{\alpha}} (\bar{\alpha} I_{K_{\theta}}))$. So we have
     \begin{equation*} A_{\varphi}^{\alpha}{C_{\alpha,\frac{\theta}{ \alpha}}}_{|\kda}= A_{\varphi}^{\alpha}C_{\alpha}= C_{ \alpha}A_{\bar{\varphi}}^{\alpha}=C_{\alpha, \frac{\theta}{\alpha}}A_{\bar{\varphi}}^{ \alpha}\end{equation*} on $\kda$.
 On the other hand,
\begin{multline*}
     P_{\alpha}(\varphi{\alpha}P_{\frac{\theta}{\alpha}}(\bar{\alpha}C_{\alpha, \frac{\theta}{\alpha}}f))= P_{\alpha}(\varphi{\alpha}P_{\frac{\theta}{\alpha}}(\bar{\alpha}(\alpha \bar{z}\,\bar{f_{\alpha}}+\alpha C_{\frac{\theta }{\alpha}} (f_{\frac{\theta}{\alpha}}))))\\=
     P_{\alpha}(\varphi\alpha C_{\frac{\theta }{\alpha}} (f_{\frac{\theta}{\alpha}})) = 
     P \alpha P^- \varphi C_{\frac{\theta}{\alpha}}f_{\frac{\theta}{\alpha}}.
     \end{multline*}
    Thus, $A_{\varphi}^{\theta, \alpha}C_{\alpha,\frac{\theta}{\alpha}}= C_{\alpha,\frac{\theta}{\alpha}}A_{\bar{\varphi}}^{\alpha}+\widetilde{H}_{\alpha}H_{\varphi}
         C_{\frac{\theta}{\alpha}}\bar{\alpha}$.
     Analogously, $A_{\bar{\varphi}}^{\alpha,\theta}=A_{\bar{\varphi}}^{\alpha}+\alpha P_{\frac{\theta}{\alpha}}\bar{\alpha} \, \bar{\varphi} P_{\alpha}$ and
  \begin{align*}
P&\alpha P^-(\varphi C_{\frac{\theta}{\alpha}}f_{\frac{\theta}{\alpha}}) - C_{\alpha,\frac{\theta}{\alpha}}(\alpha P_{\frac{\theta}{\alpha}}(\bar{\alpha} \, \bar{\varphi} P_{\alpha}f))\\
&=   P\alpha P^-(\varphi C_{\frac{\theta}{\alpha}}f_{\frac{\theta}{\alpha}}) + P\alpha P^-(\varphi \tfrac{\theta}{\alpha}C_\alpha f_{\alpha})\\& -P\alpha P^-(\varphi \tfrac{\theta}{\alpha}C_\alpha f_{\alpha})
-\alpha C_{\frac{\theta}{\alpha}}( P_ \frac{\theta}{\alpha}(\bar{\alpha} \, \bar{\varphi} f_{\alpha})) \\ & =
P\alpha P^-(\varphi C_{\theta} f)-(P\alpha P^-\bar\alpha+\alpha P_{\frac{\theta}{\alpha}}\bar\alpha)(\theta\varphi C_\alpha f_\alpha)\\
&= \widetilde{H}_{\alpha}H_{\varphi} C_{\theta} f-P\theta P^-(\varphi C_\alpha f_\alpha),
\end{align*}
since $P\alpha P^-\bar\alpha+\alpha P_{\frac{\theta}{\alpha}}\bar\alpha= P_\theta =P\theta P^-\bar\theta$.
      Hence $ (A_{\varphi}^{\theta, \alpha}C_{\alpha,\frac{\theta}{\alpha}}- C_{\alpha,\frac{\theta}{\alpha}}A_{\bar{\varphi}}^{\alpha,\theta})f=
           (\widetilde{H}_{\alpha}H_{\varphi} C_{\theta}-\widetilde{H}_{\theta}H_{\varphi} C_{\alpha}P_{\alpha})f$ for $f\in \kdt$.

To show \eqref{han3} consider $g=g_{\frac{\theta}{\alpha}}+\tfrac{\theta}{\alpha}g_\alpha$, $g_{\alpha}\in K^\infty_{\alpha}$, $g_{\frac{\theta}{\alpha}}\in K^\infty_{\frac{\theta}{\alpha}}$. Then by Proposition \ref{rozklad}  we have
\begin{multline*}
  \tfrac{\theta}{\alpha}\ata_\varphi C_\theta g=\tfrac{\theta}{\alpha}\ata_\varphi (C_\alpha g_\alpha+\alpha C_{\frac{\theta}{\alpha}}g_{\frac{\theta}{\alpha}})=\tfrac{\theta}{\alpha}P_\alpha(\varphi C_\alpha g_\alpha)+\tfrac{\theta}{\alpha}P_\alpha(\varphi\alpha C_{\frac{\theta}{\alpha}}g_{\frac{\theta}{\alpha}})
 \\ =\tfrac{\theta}{\alpha}P_\alpha(\varphi \alpha\bar z\bar g_\alpha)+\tfrac{\theta}{\alpha}P_\alpha(\varphi\alpha C_{\frac{\theta}{\alpha}}g_{\frac{\theta}{\alpha}}),
\end{multline*}
and
\begin{multline*}
  C_\theta\aat_{\bar\varphi}P_\alpha(\tfrac{\bar\theta}{\bar\alpha}g_{\frac{\theta}{\alpha}}+g_\alpha)=C_\theta P_\theta\bar\varphi g_\alpha=P_\theta C_\theta\bar\varphi g_\alpha
  =(P_{\frac{\theta}{\alpha}}+\tfrac{\theta}{\alpha}P_\alpha \tfrac{\bar\theta}{\bar\alpha})(\theta\varphi\bar z\bar g_\alpha)\\=P_{\frac{\theta}{\alpha}}(\theta\varphi\bar z\bar g_\alpha)+\tfrac{\theta}{\alpha}P_{\alpha}(\alpha\varphi\bar z\bar g_\alpha).
\end{multline*}
Hence
\begin{align*}\label{a22}
   \tfrac{\theta}{\alpha}&\ata_\varphi C_\theta g-C_\theta\aat_{\bar\varphi}P_\alpha(\tfrac{\bar\theta}{\bar\alpha}g)= \tfrac{\theta}{\alpha}P_\alpha(\alpha\varphi C_{\frac{\theta}{\alpha}}g_{\frac{\theta}{\alpha}})-P_{\frac{\theta}{\alpha}}(\theta\varphi\bar z\bar g_\alpha)\\
   &=P\tfrac{\theta}{\alpha}P_\alpha(\alpha\varphi C_{\frac{\theta}{\alpha}}g_{\frac{\theta}{\alpha}})+P\tfrac{\theta}{\alpha}P^-(\alpha\varphi C_{\frac{\theta}{\alpha}}g_{\frac{\theta}{\alpha}})\\&-(P\tfrac{\theta}{\alpha}P^-(\alpha\varphi\bar z\bar g_\alpha)+P\tfrac{\theta}{\alpha}P^-(\alpha\varphi C_{\frac{\theta}{\alpha}}g_{\frac{\theta}{\alpha}}))
  \\&=P\tfrac{\theta}{\alpha}(P_\alpha+P^-)(\alpha\varphi C_{\frac{\theta}{\alpha}}g_{\frac{\theta}{\alpha}})-P\tfrac{\theta}{\alpha}P^-(\varphi ( C_\alpha g_\alpha+\alpha C_{\frac{\theta}{\alpha}}g_\frac{\theta}{\alpha}))\\&=P\theta P^-(\varphi C_{\frac{\theta}{\alpha}}g_{\frac{\theta}{\alpha}})-P\tfrac{\theta}{\alpha}P^-(\varphi C_\theta g)=\widetilde{H}_\theta H_\varphi C_{\frac{\theta}{\alpha}}g_{\frac{\theta}{\alpha}}-\widetilde{H}_{\frac{\theta}{\alpha}}H_\varphi C_\theta g,
\end{align*}
since by Proposition \ref{proj} $P_\alpha+P^-=\alpha P^-\bar\alpha$.
  \end{proof}

From \eqref{han3} we can obtain in particular the following:
\begin{corollary} Let $\alpha$, $\theta$ be nonconstant inner functions such that $\alpha\leqslant \theta$. If $A_{\varphi}^{\theta, \frac{\theta}{\alpha}}\in\mathcal{T}(\theta,\frac{\theta}{\alpha})$ for $\varphi\in L^2$, then
\begin{equation}\label{cos}\alpha A_{\varphi}^{\theta, \frac{\theta}{\alpha}}C_{\theta}- C_{\theta}A_{\bar{\varphi} }^{\frac{\theta}{\alpha}, \theta}P_{\frac\theta\alpha}\bar{\alpha}=
           \widetilde{H}_{\theta}H_{\varphi} C_{\alpha}P_\alpha -\widetilde{H}_{\alpha}H_{\varphi} C_{\theta}.\end{equation}
\end{corollary}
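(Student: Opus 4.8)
The plan is to derive \eqref{cos} directly from equation \eqref{han3} by a suitable substitution of the inner functions, exploiting the symmetry of the roles played by $\alpha$ and $\frac{\theta}{\alpha}$ in the factorization $\theta = \alpha \cdot \frac{\theta}{\alpha}$. Concretely, in \eqref{han3} the three inner functions appearing are $\alpha$, $\frac{\theta}{\alpha}$, and $\theta$, with $\theta/\alpha$ taking the role of the ``small'' divisor in the middle. Since the corollary concerns $A_\varphi^{\theta,\frac{\theta}{\alpha}}$ rather than $A_\varphi^{\theta,\alpha}$, I would set $\beta = \frac{\theta}{\alpha}$ and apply \eqref{han3} with $\alpha$ replaced by $\beta = \frac{\theta}{\alpha}$, keeping $\theta$ fixed. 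Under this replacement $\frac{\theta}{\alpha}$ becomes $\frac{\theta}{\beta} = \alpha$, so the identity \eqref{han3} transforms into
\[
\bigl(\tfrac{\theta}{\beta}\,A^{\theta,\beta}_\varphi C_\theta - C_\theta A^{\beta,\theta}_{\bar\varphi}P_\beta\tfrac{\bar\theta}{\bar\beta}\bigr)g
= \bigl(\widetilde H_\theta H_\varphi C_\alpha P_\alpha - \widetilde H_\alpha H_\varphi C_\theta\bigr)g,
\]
which, upon writing $\beta = \frac{\theta}{\alpha}$ and noting $\frac{\theta}{\beta} = \alpha$, is precisely the right-hand side of \eqref{cos} on the right, and $\alpha A^{\theta,\frac{\theta}{\alpha}}_\varphi C_\theta - C_\theta A^{\frac{\theta}{\alpha},\theta}_{\bar\varphi}P_{\frac\theta\alpha}\frac{\bar\theta}{\bar\alpha}$ on the left.

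First I would verify that the divisibility hypothesis needed to invoke \eqref{han3} is met: the statement of Theorem \ref{h1} requires the inner divisor (there $\alpha$) to divide $\theta$, and indeed $\frac{\theta}{\alpha}\leqslant\theta$ holds whenever $\alpha\leqslant\theta$, so the substitution is legitimate. Next I would carefully track each factor through the relabeling, paying particular attention to the fact that $P_\beta\frac{\bar\theta}{\bar\beta} = P_{\frac\theta\alpha}\frac{\bar\alpha}{1}$—that is, $\frac{\bar\theta}{\bar\beta} = \bar\alpha$—which converts the projection-and-multiplication operator $P_\beta\frac{\bar\theta}{\bar\beta}$ appearing after the substitution into $P_{\frac\theta\alpha}\bar\alpha$, matching the operator $P_{\frac\theta\alpha}\bar\alpha$ in the left-hand side of \eqref{cos}. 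The conjugation symbols transform as $C_\beta \mapsto C_{\frac\theta\alpha}$ and $C_{\frac\theta\beta}\mapsto C_\alpha$, and the Hankel and co-Hankel operators $\widetilde H_\alpha, \widetilde H_\beta, \widetilde H_\theta$ relabel accordingly, producing exactly $\widetilde H_\theta H_\varphi C_\alpha P_\alpha - \widetilde H_\alpha H_\varphi C_\theta$ on the right.

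I do not expect a genuine obstacle here, since the result is a bookkeeping specialization of an already-proved identity; the only real care required is to ensure that the substitution $\alpha\leftrightarrow\frac{\theta}{\alpha}$ is carried out consistently everywhere—in the subscripts of the truncated Toeplitz operators, the conjugations, and the projections—and that the boundedness hypothesis $A^{\theta,\frac{\theta}{\alpha}}_\varphi\in\mathcal{T}(\theta,\frac{\theta}{\alpha})$ guarantees all operators in the relabeled \eqref{han3} are bounded, so that the identity extends from the dense subset to all of $\kdt$ exactly as in the proof of Theorem \ref{h1}. Should one prefer a self-contained argument, an alternative is to repeat verbatim the computation proving \eqref{han3}, decomposing a generic $f = f_{\frac\theta\alpha} + \frac{\theta}{\alpha}f_\alpha$ and applying Proposition \ref{rozklad} and Proposition \ref{proj}; but the substitution route is cleaner and avoids recomputation.
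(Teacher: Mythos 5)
Your proposal is correct and matches the paper's own derivation, which obtains \eqref{cos} precisely by specializing \eqref{han3} with the divisor $\alpha$ replaced by $\frac{\theta}{\alpha}$ (legitimate since $\frac{\theta}{\alpha}\leqslant\theta$), so that $\frac{\theta}{\beta}=\alpha$ and $\frac{\bar\theta}{\bar\beta}=\bar\alpha$. The only blemish is the stray $P_{\frac{\theta}{\alpha}}\frac{\bar\theta}{\bar\alpha}$ in your first displayed left-hand side, which should read $P_{\frac{\theta}{\alpha}}\bar\alpha$ as you yourself correctly note in the following paragraph.
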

Note that comparing \eqref{han2} with \eqref{cos} we get:
\begin{equation}
 A_{\varphi}^{\theta, \alpha}C_{\alpha,\frac{\theta}{\alpha}}+\alpha A_{\varphi}^{\theta, \frac{\theta}{\alpha}}C_{\theta}=C_{\alpha,\frac{\theta}{\alpha}}A_{\bar{\varphi}}^{\alpha, \theta}P_\alpha+
 C_{\theta}A_{\bar{\varphi}}^{\frac{\theta}{\alpha}, \theta}P_{\frac\theta\alpha}\bar{\alpha},
\end{equation}
which is equivalent to \eqref{sym}. Hence we obtained another proof of \eqref{sym}.

\newpage

\section{Examples with Hankel matrices}
To illustrate the equalities in  Theorem \ref{h1} let us consider the following examples.

\begin{example}
Let $\alpha=z^3$, $\theta=z^5$ and $\varphi=\sum_{n=-4}^{2}a_nz^n\in \overline{K_{z^5}}+K_{z^3}$. Then for $f=(z_0,z_1,z_2, z_3, z_4)\in K_{z^5}$ we have, regarding the left hand side of \eqref{han1},
 \begin{equation*}A_{\varphi}^{z^5, z^3}C_{z^5}f=\left[ \   \begin{BMAT}{ccccc}{ccc}
    a_{-4} & a_{-3} & a_{-2} & a_{-1} & a_{0} \\
    a_{-3} & a_{-2} & a_{-1} & a_{0} & a_{1}\\
    a_{-2} & a_{-1} & a_{0} & a_1 & a_2 \addpath{(0,0,0)rrruuulllddd}
   \end{BMAT} \right]
\left[\begin{BMAT}{c}{ccccc} \bar z_0 \\ \bar z_1\\ \bar z_2\\ \bar z_3\\ \bar z_4 \end{BMAT} \right]\end{equation*}
 and
 \begin{equation*}
 C_{z^5}A_{\overline{z^5}}^{z^3, z^5}P_{z^3} f=\left[\    \begin{BMAT}{ccc}{ccccc}
    a_{-4} & a_{-3} & a_{-2} \\
    a_{-3} & a_{-2} & a_{-1} \\
    a_{-2} & a_{-1} & a_0 \\
    a_{-1} & a_0& a_ 1 \\
    a_0 & a_1 & a_2\addpath{(0,2,0)rrruuulllddd}
   \end{BMAT}\  \right]
\left[\begin{BMAT}{c}{ccc} \bar z_0 \\ \bar z_1\\ \bar z_2
\end{BMAT} \right].
 \end{equation*}
 The right hand side is given by Hankel matrices
 $$\widetilde{H}_{z^3}H_{\bar z^3z^5}C_{z^2}P_{z^2}(\bar z^3f)=
 \left[   \begin{BMAT}{cc}{ccc}
     a_{-1} & a_{0} \\
     a_{0} & a_{1}\\
     a_1 & a_2\\
   \end{BMAT} \right]
\left[\begin{BMAT}{c}{cc}  \bar z_3\\ \bar z_4 \end{BMAT} \right]
 $$
 and
 \begin{equation*}
z^3\widetilde{H}_{z^2}H_{\bar z^5\varphi}C_{z^5} P_{z^3} f=
\left[   \begin{BMAT}{ccc}{cc}
     a_{-1} & a_{0}& a_{1} \\
     a_{0} & a_{1}& a_{2}
   \end{BMAT} \right]
\left[\begin{BMAT}{c}{ccc}  \bar z_0 \\ \bar z_1\\ \bar z_2 \end{BMAT} \right].
 \end{equation*}
\end{example}
\begin{example} The equation \eqref{han2} will be illustrated with the same data as before. Hence
 \begin{equation*}A_{\varphi}^{z^5, z^3}C_{z^3, z^2}f=\left[  \  \begin{BMAT}{ccccc}{ccc}
    a_{-2} & a_{-1} & a_{0} & a_{-4} & a_{-3} \\
    a_{-1} & a_{0} & a_{1} & a_{-3} & a_{-2}\\
    a_{0} & a_{1} & a_{2} & a_{-2} & a_{-1}\addpath{(0,0,0)rrruuulllddd}
   \end{BMAT} \right]
\left[\begin{BMAT}{c}{ccccc} \bar z_0 \\ \bar z_1\\ \bar z_2\\ \bar z_3\\ \bar z_4 \end{BMAT} \right]
 \end{equation*}
 and
 \begin{equation*}
 C_{z^3,z^2}A_{\overline{\varphi}}^{z^3,z^5}P_{z^3} f=\left[ \   \begin{BMAT}{ccc}{ccccc}
    a_{-2} & a_{-1} & a_{0} \\
    a_{-1} & a_{0} & a_{1} \\
    a_{0} & a_{1} & a_2 \\
    a_{-4} & a_{-3}& a_{-2} \\
    a_{-3} & a_{-2} & a_{-1}\addpath{(0,2,0)rrruuulllddd}
   \end{BMAT}\  \right]
\left[\begin{BMAT}{c}{ccc} \bar z_0 \\ \bar z_1\\ \bar z_2
\end{BMAT} \right].
 \end{equation*}

 On the other hand,
 \begin{equation*}\widetilde{H}_{z^3}H_{\varphi} C_{z^5}f=\left[\    \begin{BMAT}{ccccc}{ccc}
    0 & 0 & 0 & a_{-4} & a_{-3} \\
    0 & 0& a_{-4} & a_{-3} & a_{-2}\\
    0 & a_{-4} & a_{-3} & a_{-2} & a_{-1}\addpath{(0,0,0)rrruuulllddd}
   \end{BMAT} \right]
\left[\begin{BMAT}{c}{ccccc} \bar z_0 \\ \bar z_1\\ \bar z_2\\ \bar z_3\\ \bar z_4 \end{BMAT} \right]
 \end{equation*}

 and
 \begin{equation*}
 \widetilde{H}_{z^5}H_{\varphi} C_{z^3}P_{z^3}f=\left[ \   \begin{BMAT}{ccc}{ccccc}
    0 & 0 & 0 \\
 0 & 0 & a_{-4} \\
    0 & a_{-4} & a_{-3} \\
    a_{-4} & a_{-3}& a_{-2} \\
    a_{-3} & a_{-2} & a_{-1}\addpath{(0,2,0)rrruuulllddd}
   \end{BMAT}\  \right]
\left[\begin{BMAT}{c}{ccc} \bar z_0 \\ \bar z_1\\ \bar z_2
\end{BMAT} \right].
 \end{equation*}
\end{example}

\begin{example}Using the same data again we obtain for the equation \eqref{han3}
\begin{equation*}z^2A_{\varphi}^{z^5, z^3}C_{z^5}f=z^2\left[   \begin{BMAT}{ccccc}{ccc}
    a_{-4} & a_{-3} & a_{-2} & a_{-1} & a_{0} \\
    a_{-3} & a_{-2} & a_{-1} & a_{0} & a_{1}\\
    a_{-2} & a_{-1} & a_{0} & a_1 & a_2\addpath{(2,0,0)rrruuulllddd}
   \end{BMAT}\  \right]
\left[\begin{BMAT}{c}{ccccc} \bar z_0 \\ \bar z_1\\ \bar z_2\\ \bar z_3\\ \bar z_4 \end{BMAT} \right]\end{equation*}
and
\begin{equation*}
 C_{z^5}A_{\overline{\varphi}}^{z^3,z^5}P_{z^3}\alpha \bar z^2f=\left[ \  \begin{BMAT}{ccc}{ccccc}
    a_{-4} & a_{-3} & a_{-2} \\
    a_{-3} & a_{-2} & a_{-1} \\
    a_{-2} & a_{-1} & a_0 \\
    a_{-1} & a_0& a_ 1 \\
    a_0 & a_1 & a_2\addpath{(0,0,0)rrruuulllddd}
   \end{BMAT}\ \right]
\left[\begin{BMAT}{c}{ccc} \bar z_2 \\ \bar z_3\\ \bar z_4
\end{BMAT} \right].
 \end{equation*}
 On the other hand,
 \begin{equation*}
 \widetilde{H}_{z^5}H_{\varphi} C_{z^2}P_{z^2}f=\left[ \  \begin{BMAT}{cc}{ccccc}
    0 & 0 \\
 0 & a_{-4} \\
  a_{-4} & a_{-3} \\
 a_{-3}& a_{-2} \\
 a_{-2} & a_{-1}\addpath{(0,3,0)rruulldd}
   \end{BMAT}\ \right]
\left[\begin{BMAT}{c}{cc} \bar z_0 \\ \bar z_1
\end{BMAT} \right]
 \end{equation*}
 and
 \begin{equation*}\widetilde{H}_{z^2}H_{\varphi} C_{z^5}f=\left[\    \begin{BMAT}{ccccc}{cc}
    0 & 0& a_{-4} & a_{-3} & a_{-2}\\
    0 & a_{-4} & a_{-3} & a_{-2} & a_{-1}\addpath{(0,0,0)rruulldd}
   \end{BMAT} \right]
\left[\begin{BMAT}{c}{ccccc} \bar z_0 \\ \bar z_1\\ \bar z_2\\ \bar z_3\\ \bar z_4 \end{BMAT} \right].
 \end{equation*}
\end{example}

\end{document}